
\documentclass[11pt]{article}
\usepackage{amsfonts}
\usepackage{graphicx}
\usepackage{epsfig}
\usepackage{amssymb, amsmath}
\usepackage{amsthm}
\usepackage{setspace}
\usepackage[top = 1in, bottom = 1in, left = 1.2in, right =
1.2in]{geometry}

\setcounter{MaxMatrixCols}{10}

\doublespacing
\newtheorem{theorem}{Theorem}[section]

\newtheorem{lemma}[theorem]{Lemma}

\newcommand {\eps}{\varepsilon}

\def \<{\langle}
\def \>{\rangle}

\def \eps{\varepsilon}

\begin{document}

\title{Acoustic Limit for the Boltzmann equation \\
in Optimal Scaling}

\singlespacing

\author{Yan Guo \\
\textit{Brown University} \\
\textit{guoy@cfm.brown.edu} \vspace*{.10in} \\
Juhi Jang\\
\textit{Courant Institute}\\
\textit{juhijang@cims.nyu.edu} \vspace*{.10in} \\
Ning Jiang\\
\textit{Courant Institute}\\
\textit{njiang@cims.nyu.edu}}
\maketitle

\begin{abstract}
Based on a recent $L^{2}\mbox{-}L^{\infty }$ framework, we establish
the acoustic limit of the Boltzmann equation for general collision
kernels. The scaling of the fluctuations with respect to Knudsen
number is optimal. Our approach is based on a new analysis of the
compressible Euler limit of the Boltzmann equation, as well as
refined estimates of Euler and acoustic solutions.
\end{abstract}



\numberwithin{equation}{section}


\section{Introduction and Main Results}

We study the Boltzmann equation
\begin{equation}
\partial _{t}F^{\varepsilon }+v\cdot \nabla _{x}F^{\varepsilon }=\frac{1}{%
\varepsilon }\mathcal{Q}(F^{\varepsilon },F^{\varepsilon })
\label{boltzmann}
\end{equation}%
where $F^{\varepsilon }(t,x,v)\geq 0$ is the density of particles of
velocity $v\in \mathbf{R}^{3},$ and position $x\in \Omega =$
$\mathbf{R}^{3}$ or $\mathbf{T}^{3},~$a periodic box. The positive
parameter $\varepsilon$ is the Knudsen number. Throughout this
paper, the collision operator takes the form
\begin{equation}
\begin{aligned} {\mathcal{Q}}(F_1\,,
F_2)(v)&=\int_{\mathbf{R}^3}\int_{\mathbf{S}^2}|v-u|^\gamma
F_1(u')F_2(v')B(\theta)\, d\omega\, du\\
&-\int_{\mathbf{R}^3}\int_{\mathbf{S}^2}|v-u|^\gamma F_1(u)F_2(v)B(\theta)\,
d\omega\, du\,, \end{aligned}
\end{equation}
where $-3 < \gamma \leq 1$, $u^{\prime}=u+[(v-u)\!\cdot\!\omega]\omega$, $%
v^{\prime}=v-[(v-u)\!\cdot\!\omega]\omega$, $\cos
\theta=(u-v)\!\cdot\!\omega/|v-u|$, and $0 < B(\theta) \leq C|\cos(\theta)|$%
. Such collision operators cover both the hard-sphere interaction and
inverse power law with an angular cutoff. The hard potential means $0 \leq
\gamma \leq 1$, while soft potential means $-3 < \gamma <0$.

\subsection{Hilbert Expansion}

We define a family of special distribution functions $\mu (t,x,v)$ the \emph{%
local Maxwellians} by
\begin{equation}
\mu (t\,,x\,,v)\equiv \frac{\rho (t,x)}{[2\pi T(t,x)]^{3/2}}\exp \left\{ -%
\frac{[v-\mathfrak{u}(t,x)]^{2}}{2T(t,x)}\right\}   \label{localmax}
\end{equation}%
which are equilibrium of the collision process:
\begin{equation*}
\mathcal{Q}(\mu \,,\mu )=0\,.
\end{equation*}%
$(\rho \,,\mathfrak{u}\,,T)$ represent the macroscopic density, bulk
velocity, and temperature respectively. If $(\rho \,,\mathfrak{u}\,,T)$ are
constant in $t$ and $x,$ $\mu $ is called a \emph{global Maxwellian}. It was
shown in \cite{Caf,GJJ} that for hard-sphere interaction, namely $\gamma =1$%
, as $\varepsilon \rightarrow 0$, $\{F^{\varepsilon }\}$ solutions to the
Boltzmann equation \eqref{boltzmann} converge to a local Maxwellian $\mu $
induced by a solution to the compressible Euler system:
\begin{equation}
\begin{split}
\partial _{t}\rho +\nabla _{x}\!\cdot \!({\rho }\mathfrak{u})& =0 \\
\partial _{t}(\rho \mathfrak{u})+\nabla _{x}\!\cdot \!(\rho \mathfrak{u}%
\otimes \mathfrak{u})+\nabla _{x}p& =0 \\
\partial _{t}\left[ \rho (e+\tfrac{1}{2}|\mathfrak{u}|^{2})\right] +\nabla
_{x}\!\cdot \!\left[ \rho \mathfrak{u}(e+\tfrac{1}{2}|\mathfrak{u}|^{2})%
\right] +\nabla _{x}\!\cdot \!(p\mathfrak{u})& =0
\end{split}
\label{Euler}
\end{equation}%
with the equation of state
\begin{equation}
p=\rho RT=\tfrac{2}{3}\rho e  \label{ES}
\end{equation}%
as long as the solution stays smooth. Let $(\rho (t,x),\mathfrak{u}%
(t,x),T(t,x))$ be a smooth solution of the Euler equations
(\ref{Euler}) for $t\in \lbrack 0,\tau ],$ $x\in \Omega $. Consider
the local Maxwellian $\mu $ from $(\rho ,\mathfrak{u},T)$ as in
\eqref{localmax}. As in \cite{Caf}, we take the Hilbert expansion of
solutions around $F_{0}\equiv \mu $ with the form
\begin{equation*}
F^{\varepsilon }=\sum_{n=0}^{5}\varepsilon ^{n}F_{n}+\varepsilon
^{3}F_{R}^{\varepsilon }\,,
\end{equation*}%
where $F_{0},...,F_{5}$ are the first 6 terms of the Hilbert expansion,
independent of $\varepsilon ,$ which solve the equations
\begin{equation*}
\begin{split}
0& =\mathcal{Q}(F_{0},F_{0}), \\
\{\partial _{t}+v\cdot \nabla _{x}\}F_{0}& =\mathcal{Q}(F_{0},F_{1})+%
\mathcal{Q}(F_{1},F_{0}), \\
\{\partial _{t}+v\cdot \nabla _{x}\}F_{1}& =\mathcal{Q}(F_{0},F_{2})+%
\mathcal{Q}(F_{2},F_{0})+\mathcal{Q}(F_{1},F_{1}), \\
& \;\;\;\;... \\
\{\partial _{t}+v\cdot \nabla _{x}\}F_{5}& =\mathcal{Q}(F_{0},F_{6})+%
\mathcal{Q}(F_{6},F_{0})+\sum_{\substack{ i+j=6 \\ 1\leq i\leq 5,1\leq j\leq
5}}\mathcal{Q}(F_{i},F_{j}).
\end{split}%
\end{equation*}%
We can construct smooth $F_{1}(t,x,v),F_{2}(t,x,v),...,F_{6}(t,x,v)$ for $%
0\leq t\leq \tau .$ For more detailed discussion, see \cite{Caf}. Now we put
$F^{\varepsilon }=\sum_{n=0}^{5}\varepsilon ^{n}F_{n}+\varepsilon
^{3}F_{R}^{\varepsilon }$ into the Boltzmann equation \eqref{boltzmann} to
derive the remainder equation for $F_{R}^{\varepsilon }$
\begin{equation}
\begin{split}
& \partial _{t}F_{R}^{\varepsilon }+v\cdot \nabla _{x}F_{R}^{\varepsilon }-%
\frac{1}{\varepsilon }\{\mathcal{Q}(\mu ,F_{R}^{\varepsilon })+\mathcal{Q}%
(F_{R}^{\varepsilon },\mu )\} \\
=\,& \varepsilon ^{2}\mathcal{Q}(F_{R}^{\varepsilon },F_{R}^{\varepsilon
})+\sum_{i=1}^{5}\varepsilon ^{i-1}\{\mathcal{Q}(F_{i},F_{R}^{\varepsilon })+%
\mathcal{Q}(F_{R}^{\varepsilon },F_{i})\}+\varepsilon ^{2}A
\end{split}
\label{remainder}
\end{equation}%
where
\begin{equation}
A=-\{\partial _{t}+v\cdot \nabla _{x}\}F_{5}+\sum_{i+j\geq 6,1\leq i,j\leq
5}\varepsilon ^{i+j-6}\mathcal{Q}(F_{i},F_{j}).
\end{equation}

The acoustic system is the linearization about the homogeneous state of the
compressible Euler system. After a suitable choice of units, the fluid
fluctuations $(\sigma ,u,\theta )$ satisfy
\begin{equation}
\begin{aligned} \partial_t \sigma+\nabla_{\!\!x}\!\cdot\! u=0\,,\qquad
&\sigma(x,0)=\sigma^{0}(x)\,,\\ \partial_t
u+\nabla_{\!x}(\sigma+\theta)=0\,,\qquad & u(x,0)=u^{0}(x)\,,\\
\tfrac{3}{2}\partial_t \theta+\nabla_{\!\!x}\!\cdot\! u=0\,,\qquad
&\theta(x,0)=\theta^{0}(x)\,. \end{aligned}  \label{acoustic-system}
\end{equation}%
Such acoustic system (\ref{acoustic-system}) can be formally derived
from the Boltzmann equation \eqref{boltzmann} by letting
\begin{equation}
F^{\varepsilon }=\mu ^{0}+\delta G^{\varepsilon }  \label{g}
\end{equation}%
where $\mu ^{0}$ is the global Maxwellian which corresponds to $\rho =T=1$
and $\mathfrak{u}=0$:
\begin{equation*}
\mu ^{0}\equiv \tfrac{1}{(2\pi )^{3/2}}\exp (-\tfrac{|v|^{2}}{2})
\end{equation*}%
and the fluctuation amplitude $\delta $ is a function of $\varepsilon $
satisfying
\begin{equation}
\delta \rightarrow 0\;\text{ as }\;\varepsilon \rightarrow 0\,.
\label{delta}
\end{equation}%
For instance, one can take
\begin{equation*}
\delta =\varepsilon ^{m}\text{ for any }m>0\,.
\end{equation*}%
With the above scalings, $G^{\varepsilon }$ formally converges to
\begin{equation}
G=\left\{ \sigma +v\cdot u+\left( \tfrac{|v|^{2}-3}{2}\right) \theta
\right\} \mu ^{0}  \label{gg}
\end{equation}%
as $\varepsilon \rightarrow 0$, where $\sigma ,u,\theta $ satisfy the
acoustic system \eqref{acoustic-system}. For detailed formal derivation, see
\cite{BGL1, GL}.

\subsection{Main Theorems}

The endeavor to understand how fluid dynamical equations for both
compressible and incompressible flows can be derived from kinetic
theory goes back to the founding work of Maxwell \cite{Max} and
Boltzmann \cite{Bol}. Most of these derivations are well understood
at several formal levels by now, and yet their full mathematical
justifications are still incomplete. In fact, the purpose of the
Hilbert's sixth problem \cite{Hilbert} is to seek a unified theory
of the gas dynamics including various levels of descriptions from a
mathematical standpoint. So far, there are basically three different
approaches mathematically. The first is based on spectral analysis
of the semi-group generated by the linearized Boltzmann equation,
see \cite{BU, KMN, Nishida}. The second is based on  Hilbert or
Chapman-Enskog expansions \cite{Caf, DEL}, see more recent work in
\cite{g3,j3},  \cite{G1, G2}. The third approach was the program
initiated from \cite{BGL1, BGL2}, working in the framework of global
renormalized solutions after the celebrated work of DiPerna-Lions
\cite{DL}, to justify global weak solutions of incompressible flows
(Navier-Stokes, Stokes, and Euler), and (compressible) acoustic
system, see
\cite{BGL1,BGL2,BGL3,GL,GS,JLM,LM,LM1,LM2,Saint-Raymond}.

The authors in \cite{GL} proved the convergence of the acoustic limit from
DiPerna-Lions solutions of the Boltzmann equation \eqref{boltzmann} with the
restriction on the size of fluctuations: $m>\tfrac{1}{2}$. Recently, in \cite%
{JLM}, this restriction has been relaxed to the borderline case $m=\tfrac{1}{%
2}$ by employing some new nonlinear estimates developed in \cite{LM} and a
new $L^{1}$ averaging lemma in \cite{GS}. However, due to some technical
difficulties mainly caused by the lack of local conservation laws and
regularity of renormalized solutions, the case for $m<\frac{1}{2}$ remains
an open question. On the other hand, in the framework of classical
solutions, in \cite{jj}, the authors have established the global-in-time
uniform energy estimates and proven the strong convergence for $m=1$, by
adapting the nonlinear energy method of \cite{g3,j3}. Although this method
displays in a clear way how the dissipation disappears in the acoustic limit
in terms of instant energies and dissipation rates, it does not cover other
interesting cases $0<m<1$ due to weak dissipations.

The purpose of this article is to establish the acoustic limit for $0<m<1$
via a recent $L^{2}\mbox{-}L^{\infty }$ framework. We will use $\delta $
instead of $\varepsilon ^{m}$ to denote the fluctuation amplitude. Since our
interest is the case of $0<m<1$ towards the optimal scaling, throughout the
paper, we assume that in addition to \eqref{delta},
\begin{equation}
\frac{\varepsilon }{\delta }\rightarrow 0\;\text{ as }\;\varepsilon
\rightarrow 0\,.  \label{delta2}
\end{equation}

\begin{theorem}
\label{main-theorem} Let $\tau >0$ be any given finite time and let
\begin{equation}
\sigma (0,x)=\sigma ^{0}(x),\;u(0,x)=u^{0}(x),\;\theta (0,x)=\theta
^{0}(x)\;\in H^{s},\;s\geq 4  \label{initial-data}
\end{equation}%
be any given initial data to the acoustic system \eqref{acoustic-system}.
Then there exist an $\varepsilon _{0}>0$ and a $\delta _{0}>0$ such that for
each $0<\varepsilon \leq \varepsilon _{0}$ and $0<\delta \leq \delta _{0},$
there exists a constant $C>0$ so that
\begin{equation}
\sup_{0\leq t\leq \tau }\Vert G^{\varepsilon }(t)-G(t)\Vert _{\infty
}+\sup_{0\leq t\leq \tau }\Vert G^{\varepsilon }(t)-G(t)\Vert _{2}\leq C\{%
\frac{\varepsilon }{\delta }+\delta \}  \label{AL}
\end{equation}%
where $\frac{\varepsilon }{\delta }\rightarrow 0\;$ as $\;\varepsilon
\rightarrow 0\,,$ and $G^{\varepsilon }$ and $G$ are defined in \eqref{g}
and \eqref{gg}, and $C$ depends only on $\tau $ and the initial data $\sigma
^{0},u^{0},\theta ^{0}$.
\end{theorem}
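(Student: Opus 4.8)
The plan is to connect the acoustic fluctuation $G^\varepsilon$ to the remainder $F_R^\varepsilon$ of a Hilbert expansion, then transport the known $L^2$--$L^\infty$ estimates for $F_R^\varepsilon$ through that identification. The key observation is that the Hilbert expansion around a \emph{local} Maxwellian $\mu$ induced by a compressible Euler solution $(\rho,\mathfrak u,T)$ and the acoustic scaling $F^\varepsilon = \mu^0 + \delta G^\varepsilon$ should be reconciled by choosing, as Euler data, a small perturbation of the homogeneous state: $\rho = 1 + \delta\sigma$, $\mathfrak u = \delta u$, $T = 1 + \delta\theta$, where $(\sigma,u,\theta)$ solves the \emph{compressible Euler} system \eqref{Euler} with the prescribed $H^s$ data. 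One must first check (refined estimates of Euler solutions) that such a solution exists on $[0,\tau]$ once $\delta$ is small — by the classical local existence theory for symmetric hyperbolic systems, the lifespan of the $\delta$-amplitude solution is $\gtrsim 1/\delta \to\infty$, so any fixed $\tau$ is covered for $\delta\le\delta_0$ — and that $\mu = \mu(t,x,v;\delta)$ so constructed satisfies $\mu = \mu^0 + \delta G + O(\delta^2)$ uniformly in the relevant norms, where $G$ is exactly the acoustic profile \eqref{gg}; crucially, the \emph{acoustic} system is the $\delta\to 0$ limit of the $\delta$-rescaled compressible Euler flow, so $(\sigma,u,\theta)$-as-Euler and $(\sigma,u,\theta)$-as-acoustic differ by $O(\delta)$ on $[0,\tau]$.

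**Second, I would** run the $L^2$--$L^\infty$ machinery on the remainder equation \eqref{remainder}. With $F^\varepsilon = \sum_{n=0}^5 \varepsilon^n F_n + \varepsilon^3 F_R^\varepsilon$ and $F_0 = \mu$, the standard hypocoercivity/energy estimate for \eqref{remainder} — controlling $\|F_R^\varepsilon\|_2$ via the spectral gap of the linearized operator $\mathcal L_\mu$ and absorbing the $\varepsilon^{i-1}\mathcal Q(F_i,\cdot)$ terms and the source $\varepsilon^2 A$, then closing in $L^\infty$ along characteristics as in the Guo--Jang--Jiang framework — yields $\sup_{[0,\tau]}(\|F_R^\varepsilon\|_\infty + \|F_R^\varepsilon\|_2) \le C(\delta)$, where the constant now depends on $\delta$ through the background $\mu$. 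The delicate point is \textbf{uniformity}: all the $F_i$ and $A$ are built from the $\delta$-amplitude Euler solution, so their norms scale like $\delta$ (for $F_1,\dots$) while their \emph{regularity} is controlled uniformly by the $H^s$, $s\ge 4$, bound; one checks the constant $C$ can be taken independent of $\delta\le\delta_0$ and $\varepsilon\le\varepsilon_0$, using $\varepsilon/\delta\to 0$ precisely where the $\varepsilon^{-1}$ linearized term must be dominated. Then
\begin{equation*}
G^\varepsilon - G = \frac{F^\varepsilon - \mu^0}{\delta} - G
= \frac{\mu - \mu^0 - \delta G}{\delta} + \frac{1}{\delta}\sum_{n=1}^5 \varepsilon^n F_n + \frac{\varepsilon^3}{\delta} F_R^\varepsilon,
\end{equation*}
and each term is $O(\delta)$, $O(\varepsilon/\delta)$, or $O(\varepsilon^3/\delta)$ respectively in $L^2\cap L^\infty$, giving the bound $C\{\varepsilon/\delta + \delta\}$ of \eqref{AL}.

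**The main obstacle** I expect is establishing the uniform-in-$\delta$ a priori estimate for $F_R^\varepsilon$: the background Maxwellian $\mu$ is not global, so the linearized operator $\mathcal L_\mu$ has $t,x$-dependent coefficients, and its coercivity estimate, the commutator terms from $\{\partial_t + v\cdot\nabla_x\}$ acting on the weight, and the $L^\infty$ characteristic estimates all must be shown to have constants governed only by the (uniformly bounded, $\delta$-independent after rescaling) derivatives of $(\sigma,u,\theta)$, not by any negative power of $\delta$. This is exactly the ``new analysis of the compressible Euler limit'' the abstract advertises, and it is where the interplay between the $L^2$ energy estimate and the $L^\infty$ bound — with the $\varepsilon^2 \mathcal Q(F_R^\varepsilon,F_R^\varepsilon)$ nonlinearity handled by a bootstrap on $\|F_R^\varepsilon\|_\infty$ and the gain of $\varepsilon^2$ beating the size of $F_R^\varepsilon$ — has to be carried out carefully. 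A secondary technical point is the velocity-weight bookkeeping for general soft potentials $-3<\gamma<0$, where the collision frequency degenerates for large $v$ and one needs the splitting of $\mathcal L_\mu$ into a coercive part plus a compact/small remainder to survive uniformly in the background.
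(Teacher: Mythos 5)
Your proposal follows essentially the same route as the paper: linearize around the local Maxwellian $\mu^\delta$ built from the compressible Euler solution with $\delta$-perturbed data $(1+\delta\sigma^0,\delta u^0,1+\delta\theta^0)$, use the lifespan-$\gtrsim 1/\delta$ result for symmetric hyperbolic systems to cover $[0,\tau]$ once $\delta\le\delta_0$, invoke the uniform $L^2$--$L^\infty$ control of $F_R^\varepsilon$ from Theorem~\ref{Main-Theorem} (which is stated for general smooth Euler backgrounds with moderate temperature variation, and whose constant is uniform in $\delta$ thanks to the uniform Sobolev bounds of Lemma~\ref{Life-Span}), and close via exactly the decomposition you write. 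One remark on precision: in the sentence about $(\sigma,u,\theta)$ ``solving the compressible Euler system'' you conflate the Euler fluctuation with the acoustic solution; the paper keeps them distinct, introducing the difference $(\sigma_d^\delta,u_d^\delta,\theta_d^\delta)$ with $\rho^\delta=1+\delta\sigma+\delta^2\sigma_d^\delta$, etc.\ (where $(\sigma,u,\theta)$ is the acoustic solution), and shows it satisfies a \emph{linear} symmetric hyperbolic system with bounded coefficients and forcing, from which the needed $O(\delta^2)$ bound $\|\mu^\delta-\mu^0-\delta G\|_\infty+\|\mu^\delta-\mu^0-\delta G\|_2\le C\delta^2$ follows by Gronwall plus a Taylor expansion of the local Maxwellian in the perturbation parameter (Lemmas~\ref{keylemma} and~\ref{lemma4.1}). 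You correctly anticipate this step is needed (``$O(\delta)$ on $[0,\tau]$'' --- the paper actually gets the sharper $O(\delta^2)$, which is what makes the second term $O(\delta)$ after dividing by $\delta$), but you don't give the argument; that linear-difference-system lemma is the one concrete piece missing from your sketch.
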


Our proof is different from the previous approach. Instead of estimating $%
G^{\varepsilon }-G$ directly, we make a detour to control of $G^{\varepsilon
}-G$ in two steps. The first step (Section 2) is to show as $\varepsilon
\rightarrow 0$,  $F^{\varepsilon }$ is close to the local Maxwellian $\mu
^{\delta },$ constructed from the smooth solution of the compressible Euler
equation. In fact, we are able to establish (Theorem \ref{Main-Theorem})
\begin{equation*}
F^{\varepsilon }-\mu ^{\delta }=O(\varepsilon ),
\end{equation*}%
before the time of possible shock formation, which is of the order of $\frac{%
1}{\delta }$ in the acoustic scaling (longer than any fixed time $\tau $!)$.$
The second step (Section 3) is to show that (Lemma \ref{lemma4.1}), within
the time scale of $\frac{1}{\delta },$\textbf{\ }%
\begin{equation*}
\mu ^{\delta }-\mu ^{0}=\delta G+O(\delta ^{2}).
\end{equation*}%
Such an estimate confirms that the solution of the acoustic equation $G$ is
the first order (linear) approximation of that to the Euler equations.
Combining these two estimates and comparing with \eqref{g}, we deduce our
theorem by dividing $\delta .$ Our proof relies on the existence of global
in-time smooth solutions to the linear acoustic system (\ref{acoustic-system}%
).

Our main technical contribution is a new analysis of the classical
compressible Euler limit to complete step one above. To precisely state our
result, we use the standard notation $H^{s}$ to denote the Sobolev space $%
W^{s,2}(\Omega )$ with corresponding norm $\Vert \cdot \Vert _{H^{s}}$. We
also use the standard notation $\Vert \cdot \Vert _{2}$ and $\Vert \cdot
\Vert _{\infty }$ to denote $L^{2}$ norm and $L^{\infty }$ norm in both $%
(x,v)\in \Omega \times \mathbf{R}^{3}$ variables. We use $\langle \cdot
\,,\cdot \rangle $ to denote the standard $L^{2}$ inner product. We also
define a weighted $L^{2}$ norm
\begin{equation*}
\Vert g\Vert _{\nu }^{2}=\int_{\Omega \times \mathbf{R}^{3}}g^{2}(x\,,v)\nu
(v)\,dx\,dv\,,
\end{equation*}%
where the collision frequency $\nu (v)\equiv \nu (\mu )(v)$ is defined as
\begin{equation*}
\nu (\mu )=\int_{\mathbf{R}^{3}}B(\theta )|v-v^{\prime }|^{\gamma }\mu
(v^{\prime })\,dv^{\prime }d\omega \,.
\end{equation*}%
Note that for given $-3<\gamma \leq 1$,
\begin{equation*}
\nu (\mu )\sim \rho \,(1+|v|)^{\gamma }.
\end{equation*}%
Define the linearized collision operator $\mathcal{L}$ by
\begin{equation*}
{\mathcal{L}}g=-\frac{1}{\sqrt{\mu }}\{\mathcal{Q}(\mu ,\sqrt{\mu }g)+%
\mathcal{Q}(\sqrt{\mu }g,\mu )\}.
\end{equation*}%
Let $\mathbf{P}g$ be the $L_{v}^{2}$ projection with respect to $[\sqrt{\mu }%
,v\sqrt{\mu },|v|^{2}\sqrt{\mu }].$ Then it is well-known that there exists
a positive number $c_{0}>0$ such that
\begin{equation}
\langle {\mathcal{L}}g,g\rangle \geq c_{0}\Vert \{{\mathbf{I-P}}\}g\Vert
_{\nu }^{2}\,.  \label{coercivity}
\end{equation}%
The solutions to the Boltzmann equation \eqref{boltzmann} are constructed
near the local Maxwellian of the compressible Euler system. So it is natural
to rewrite the remainder
\begin{equation}
F_{R}^{\varepsilon }=\sqrt{\mu }f^{\varepsilon }.  \label{f}
\end{equation}%
Since $\mu $ is a local Maxwellian, the equation of the remainder includes
the new term $\sqrt{\mu }^{-1}(\partial _{t}+v\!\cdot \!\nabla _{\!x})\sqrt{%
\mu }f^{\varepsilon }$. At large velocities, the distribution functions may
be growing rapidly due to streaming. To remedy this difficulty, following
Caflisch \cite{Caf}, we introduce a global Maxwellian
\begin{equation*}
\mu _{M}=\frac{1}{(2\pi T_{M})^{3/2}}\exp \left\{ -\frac{|v|^{2}}{2T_{M}}%
\right\} .
\end{equation*}%
where $T_{M}$ satisfies the following condition
\begin{equation}
T_{M}<\max\limits_{t\in \lbrack 0,\tau ],x\in \Omega }T(t,x)<2T_{M}\,.
\label{assumption-T}
\end{equation}
Note that under the assumption \eqref{assumption-T}, there exist constants $%
c_{1}\,,c_{2}$ such that for some $1/2<\alpha <1$ and for each $%
(t\,,x\,,v)\in \lbrack 0,\tau ]\times \Omega \times \mathbf{R}^{3}$, the
following holds
\begin{equation}
c_{1}\mu _{M}\leq \mu \leq c_{2}\mu _{M}^{\alpha }.  \label{bound}
\end{equation}%
We further define
\begin{equation}
F_{R}^{\varepsilon }=\{1+|v|^{2}\}^{-\beta }\sqrt{\mu _{M}}h^{\varepsilon
}\equiv \frac{1}{w(v)}\sqrt{\mu _{M}}h^{\varepsilon }  \label{h}
\end{equation}%
for any fixed
\begin{equation*}
\beta \geq \frac{9-2\gamma }{2}.
\end{equation*}

We now state the result on the compressible Euler limit:

\begin{theorem}
\label{Main-Theorem} Assume that the solution to the Euler equations $[\rho
(t,x),u(t,x),T(t,x)]$ is smooth and $\rho (t,x)$ has a positive lower bound
for $0\leq t\leq \tau $. Furthermore, assume that the temperature $T(t,x)$
satisfies the condition \eqref{assumption-T}. Let
\begin{equation}\label{initial}
F^{\varepsilon }(0,x,v)=\mu (0,x,v)+\sum_{n=1}^{5}\varepsilon
^{n}F_{n}(0,x,v)+\varepsilon ^{3}F_{R}^{\varepsilon }(0,x,v)\geq 0.
\end{equation}%
Then there is an $\varepsilon _{0}>0$ such that for $0<\varepsilon \leq
\varepsilon _{0},$ and for any $\beta \geq \frac{9-2\gamma }{2}$, there
exists a constant $C_{\tau }(\mu ,F_{0},F_{1},..F_{6})$ such that
\begin{equation}
\begin{split}
& \sup_{0\leq t\leq \tau }\varepsilon ^{\frac{3}{2}}\left\Vert \sqrt{\mu }%
^{-1}(1+|v|^{2})^{\beta }F_{R}^{\varepsilon }(t)\right\Vert _{\infty
}+\sup_{0\leq t\leq \tau }\left\Vert \sqrt{\mu }^{-1}F_{R}^{\varepsilon
}(t)\right\Vert _{2} \\
& \leq C_{\tau }\left\{ \varepsilon ^{\frac{3}{2}}\left\Vert \sqrt{\mu }%
^{-1}(1+|v|^{2})^{\beta }F_{R}^{\varepsilon }(0)\right\Vert _{\infty
}+\left\Vert \sqrt{\mu }^{-1}F_{R}^{\varepsilon }(0)\right\Vert
_{2}+1\right\} ,
\end{split}%
\end{equation}%
where $F_{R}^{\varepsilon }$ is the solution to the remainder equation %
\eqref{remainder}.
\end{theorem}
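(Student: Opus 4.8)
The plan is to combine the classical $L^2$ energy estimate with the $L^\infty$ estimate via the characteristic formulation, closing a coupled bootstrap. First I would work with the remainder equation \eqref{remainder} in the form $\partial_t f^\varepsilon + \VGRAD f^\varepsilon + \tfrac{1}{\varepsilon}\LL f^\varepsilon = \text{(streaming term)} + \text{(lower order)} + \varepsilon^2 \sqrt{\mu}^{-1}\mathcal{Q}(F_R^\varepsilon,F_R^\varepsilon) + \varepsilon^2 \sqrt{\mu}^{-1}A$, where the streaming term $\sqrt{\mu}^{-1}(\partial_t + \VGRAD)\sqrt{\mu}$ appears because $\mu$ is only a \emph{local} Maxwellian. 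The first key observation is that this streaming term, evaluated against $f^\varepsilon$ in $L^2_{x,v}$, produces a bounded-coefficient quadratic form $\langle \Gamma f^\varepsilon, f^\varepsilon\rangle$ where $\Gamma$ involves $(\partial_t + \VGRAD)\log\sqrt{\mu}$, which is smooth and polynomially bounded in $v$ with bounds controlled by $C_\tau(\mu,F_0,\dots,F_6)$; this is the term that forces the Grönwall factor $C_\tau$ rather than a uniform constant. The coercivity \eqref{coercivity} gives $\langle \tfrac{1}{\varepsilon}\LL f^\varepsilon, f^\varepsilon\rangle \geq \tfrac{c_0}{\varepsilon}\|\{\mathbf{I-P}\}f^\varepsilon\|_\nu^2$, so after pairing with $f^\varepsilon$ I obtain, schematically,
\begin{equation*}
\tfrac{1}{2}\tfrac{d}{dt}\|f^\varepsilon\|_2^2 + \tfrac{c_0}{\varepsilon}\|\{\mathbf{I-P}\}f^\varepsilon\|_\nu^2 \leq C_\tau \|f^\varepsilon\|_2^2 + (\text{nonlinear}) + (\text{source}).
\end{equation*}
The source term $\varepsilon^2\langle \sqrt{\mu}^{-1}A, f^\varepsilon\rangle$ is $O(\varepsilon^2)\|f^\varepsilon\|_2$ since $A$ is built from smooth Hilbert correctors, and the linear terms $\sum \varepsilon^{i-1}\langle\sqrt{\mu}^{-1}\{\mathcal{Q}(F_i,F_R^\varepsilon)+\mathcal{Q}(F_R^\varepsilon,F_i)\}, f^\varepsilon\rangle$ are controlled, using $i\geq 1$, by $C_\tau\|f^\varepsilon\|_\nu^2$ — here the $\{\mathbf{I-P}\}$-dissipation absorbs the non-hydrodynamic part and the hydrodynamic part is absorbed into $C_\tau\|f^\varepsilon\|_2^2$ after noting the $O(1)$ (not $O(1/\varepsilon)$) size.

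Next I would handle the genuinely nonlinear term $\varepsilon^2\langle \sqrt{\mu}^{-1}\mathcal{Q}(F_R^\varepsilon, F_R^\varepsilon), f^\varepsilon\rangle$. Writing $F_R^\varepsilon = w^{-1}\sqrt{\mu_M}h^\varepsilon$ via \eqref{h}, a standard trilinear estimate bounds this by $C\varepsilon^2\|h^\varepsilon\|_\infty \|f^\varepsilon\|_\nu^2$ (the weight $w$ with $\beta\geq\tfrac{9-2\gamma}{2}$ is exactly what makes $\Gamma$-type gain estimates absorb the velocity growth). Thus if we have an a priori bound $\varepsilon^{3/2}\|w F_R^\varepsilon/\sqrt{\mu}\|_\infty \leq M$, i.e. $\|h^\varepsilon\|_\infty \lesssim \varepsilon^{-3/2}M$, the nonlinear contribution is $\lesssim \varepsilon^{2}\varepsilon^{-3/2}M\|f^\varepsilon\|_\nu^2 = \varepsilon^{1/2}M\|f^\varepsilon\|_\nu^2$, which for $\varepsilon$ small is absorbed by the dissipation (splitting $\|f^\varepsilon\|_\nu^2 \leq 2\|\{\mathbf{I-P}\}f^\varepsilon\|_\nu^2 + C\|f^\varepsilon\|_2^2$). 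Grönwall then yields $\sup_{[0,\tau]}\|f^\varepsilon(t)\|_2^2 \leq C_\tau\{\|f^\varepsilon(0)\|_2^2 + 1\}$ plus a term proportional to $M$ times $\varepsilon^{1/2}$, which will be reabsorbed at the end.

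For the $L^\infty$ estimate I would use the mild (Duhamel-along-characteristics) formulation of the equation for $h^\varepsilon$. The operator $\tfrac{1}{\varepsilon}\LL$ splits as $\tfrac{1}{\varepsilon}(\nu - K)$; along the backward characteristic $(t-s, x-sv, v)$ one writes $h^\varepsilon(t,x,v) = e^{-\int_0^t \nu/\varepsilon}h^\varepsilon(0,\dots) + \int_0^t e^{-\int_s^t\nu/\varepsilon}[\tfrac{1}{\varepsilon}Kh^\varepsilon + w\sqrt{\mu_M}^{-1}(\text{source} + \varepsilon^2\mathcal{Q}(F_R^\varepsilon,F_R^\varepsilon) + \dots)]\,ds$. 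Iterating the Duhamel formula once more for the $Kh^\varepsilon$ term produces a double integral whose kernel $k_w(v,v')$ (the conjugated $K$ with the $w$ weight) is integrable with a small-gain factor, so that the double-$K$ term is estimated by splitting into a large-velocity region (where $e^{-\int\nu/\varepsilon}$ is exponentially tiny) and a compact velocity region (where one trades a factor of $L^\infty$ for a factor of $L^2$ via the standard $\int\int k_w k_w$ against $\|f^\varepsilon\|_2$ argument). This gives $\varepsilon^{3/2}\|h^\varepsilon(t)\|_\infty \lesssim \varepsilon^{3/2}\|h^\varepsilon(0)\|_\infty + \sup_{[0,t]}\|f^\varepsilon(s)\|_2 + C_\tau + \varepsilon^{3/2}\cdot\varepsilon^2\cdot\|h^\varepsilon\|_\infty^2 \cdot(\dots)$, where the last quadratic term is $\lesssim \varepsilon^{3/2}\cdot\varepsilon^{-3/2}M\cdot(\varepsilon^{3/2}\|h^\varepsilon\|_\infty)$ and hence absorbable for small $\varepsilon$.

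I expect the main obstacle to be closing the coupled $L^2$–$L^\infty$ bootstrap with the correct powers of $\varepsilon$: the nonlinear term carries $\varepsilon^2$ but $\|h^\varepsilon\|_\infty$ is allowed to be as large as $\varepsilon^{-3/2}$, so the margin is only $\varepsilon^{1/2}$, and one must check carefully that \emph{every} place the nonlinearity enters (in the $L^2$ energy identity, in the single-$K$ Duhamel term, in the double-$K$ term) the residual power of $\varepsilon$ is strictly positive and that the $M$-dependent pieces can be reabsorbed into the left side once $\varepsilon_0$ is chosen small — i.e. running a continuity/bootstrap argument on the quantity $\varepsilon^{3/2}\|h^\varepsilon\|_\infty + \|f^\varepsilon\|_2$. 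A secondary technical point is the treatment of the local-Maxwellian streaming term $\sqrt{\mu}^{-1}(\partial_t+\VGRAD)\sqrt{\mu}\,f^\varepsilon$ in the $L^\infty$ characteristic argument, where it is not self-adjoint; but since its coefficient is smooth with $C_\tau$-bounds and the weight $w$ with $\beta\geq\tfrac{9-2\gamma}{2}$ dominates the at-most-quadratic velocity growth it produces, it can be absorbed into the exponential decay factor $e^{-\int\nu/\varepsilon}$ (possibly after shrinking $c_0$), at the cost of the Grönwall constant $C_\tau$. Finally, the soft-potential range $-3<\gamma<0$ requires the bound \eqref{bound} $\mu\leq c_2\mu_M^\alpha$ with $\alpha<1$ to control $\nu(\mu)$ from below by $c\,\mu_M$-type weights in the compact-velocity region, which is why the hypotheses include \eqref{assumption-T}.
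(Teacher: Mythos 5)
Your overall architecture---pairing the $f^\varepsilon$ equation with $f^\varepsilon$ in $L^2$ for a Gr\"onwall estimate, iterating Duhamel along characteristics twice to trade $L^\infty$ for $L^2$ via the compactness of the conjugated kernel, and closing the two estimates as a coupled bootstrap in $\varepsilon^{3/2}\|h^\varepsilon\|_\infty + \|f^\varepsilon\|_2$---does match the paper's strategy, and your power counting on the nonlinear terms is essentially right. But there is a genuine gap at the exact point the paper identifies as its main technical contribution: the streaming term $\sqrt{\mu}^{-1}(\partial_t + v\cdot\nabla_x)\sqrt{\mu}\, f^\varepsilon$. You assert this gives a ``bounded-coefficient quadratic form,'' but the coefficient $(\partial_t + v\cdot\nabla_x)\log\sqrt{\mu}$ is a \emph{cubic} polynomial in $v$ (the $|v|^3 f^\varepsilon$ difficulty highlighted in the introduction), so $\langle (\partial_t+v\cdot\nabla_x)\log\sqrt{\mu}\, f^\varepsilon, f^\varepsilon\rangle$ is not controlled by $C\|f^\varepsilon\|_2^2$ and the $L^2$ energy identity does not close as you wrote it. The paper's fix is a velocity splitting at $|v|\sim \kappa/\varepsilon^{1/(3-\gamma)}$: on the high-velocity region the polynomial weight and the pointwise bound $(1+|v|^2)^{3/2}|f^\varepsilon|\leq (1+|v|^2)^{\gamma-3}|h^\varepsilon|$ (this is exactly where $\beta\geq\tfrac{9-2\gamma}{2}$ is used) yield a term $C_\kappa\varepsilon^2\|h^\varepsilon\|_\infty\|f^\varepsilon\|_2$; on the low-velocity region one projects onto $\mathbf{P}$ and $\mathbf{I}-\mathbf{P}$ and pays $\tfrac{C\kappa^{3-\gamma}}{\varepsilon}\|\{\mathbf{I}-\mathbf{P}\}f^\varepsilon\|_\nu^2$, which is absorbed into the dissipation by taking $\kappa$ small. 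Without this mechanism the $L^2$ estimate fails, and your proposal does not supply a substitute.

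Two smaller misstatements: you describe the streaming growth as ``at-most-quadratic'' (it is cubic), and you flag the streaming term as an issue in the $L^\infty$ Duhamel argument, but in fact it does not appear there at all---since $h^\varepsilon$ is defined by conjugating with the \emph{global} Maxwellian $\sqrt{\mu_M}$ and the $(t,x)$-independent weight $w$, the transport operator commutes with the conjugation and the resulting $h^\varepsilon$-equation has no local-Maxwellian streaming term; the local/global mismatch instead surfaces through the operator ${\mathcal{L}}_M$ (a conjugation of $\mathcal{Q}(\mu,\cdot)+\mathcal{Q}(\cdot,\mu)$ by $\sqrt{\mu_M}$), whose kernel is controlled using the hypothesis \eqref{assumption-T} via \eqref{bound}.
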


\noindent{\bf Remark:} Applying the bound \eqref{bound}, Lemma A.1
and Lemma A.2 of \cite{g3}, we can carefully choose 
$F_{R}^{\varepsilon }(0,x,v)$ in \eqref{initial} so that the initial
data \eqref{initial} are non-negative. Because the argument is quite
similar, we omit the details here. \\

Based on the a priori estimates given in Theorem \ref{Main-Theorem},
following the arguments in the pioneering work of Caflisch \cite{Caf}, we
can immediately derive the compressible Euler limit as well as the existence
of the solutions to the Boltzmann equation. As in \cite{Caf}, the Hilbert
expansion provides a natural way to establish an uniform in $\varepsilon $
control for the Euler limit. However, it was well-known \cite{Caf} that an $%
|v|^{3}f^{\varepsilon }$ term due to streaming in the $L^{2}$ estimate
creates an unpleasant analytical difficulty. We employ both $L^{2}$ and $%
L^{\infty }$ estimate with polynomial velocity weight \cite{G2,GJJ} to
control such a term with a high power of velocity\thinspace\ $v.$ On the one
hand, our analysis requires an additional assumption of moderate temperature
variation (\ref{assumption-T}). On the other hand, we do not need the
truncation of the Hilbert expansion as in \cite{Caf}, so that the positivity
of the solution is guaranteed. In particular, our theorem is designed to
apply to the acoustic limit because the temperature variation is only of the
order $\delta .$ Moreover, a cutoff trick used in \cite{SG} enables us to
treat all soft potentials $-3<\gamma \leq 1$ with an angular cutoff.

\section{Compressible Euler Limit}

In this section, we prove Theorem \ref{Main-Theorem}. Note that it suffices
to estimate $\|f^{\varepsilon }(t)\|_{2}$ and $\|h^{\varepsilon
}(t)\|_{\infty }$ to conclude the theorem. The proof relies on an interplay
between $L^{2}$ and $L^{\infty }$ estimates for the Boltzmann equation \cite%
{G2,GJJ}: $L^2$ norm of $f^\eps$ is controlled by the $L^\infty$ norm of the
high velocity part and vice versa. These uniform $L^2\mbox{-}L^\infty$
estimates are stated in the following two lemmas:

\begin{lemma}
\label{l2}\emph{($L^{2}\mbox{-}$Estimate):} Let $(\rho ,\mathfrak{u},T)$ be
a smooth solution to the Euler equations such that $\rho $ has a positive
lower bound and $T$ satisfies the condition \eqref{assumption-T}. Let $f^{%
\eps}\,,h^{\eps}$ be defined in \eqref{f} and \eqref{h}, and $c_{0}>0$ be as
in the coercivity estimate \eqref{coercivity}. Then there exists $%
\varepsilon _{0}>0$ and a positive constant $C=C(\mu \,,F_{0},F_{1}\,,\cdots
\,,F_{6})>0$, such that for all $\varepsilon <\varepsilon _{0}$
\begin{equation}
\frac{d}{dt}\Vert f^{\varepsilon }\Vert _{2}^{2}+\frac{c_{0}}{2\varepsilon }%
\Vert \{\mathbf{I}-\mathbf{P}\}f^{\varepsilon }\Vert _{\nu }^{2}\leq C\{%
\sqrt{\varepsilon }\Vert \varepsilon ^{3/2}h^{\varepsilon }\Vert _{\infty
}+1\}(\Vert f^{\varepsilon }\Vert _{2}^{2}+\Vert f^{\varepsilon }\Vert
_{2})\,.  \label{l2estimate}
\end{equation}
\end{lemma}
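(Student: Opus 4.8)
The plan is to derive the differential inequality \eqref{l2estimate} by the standard energy method: multiply the remainder equation \eqref{remainder} (written for $f^\varepsilon$ via $F_R^\varepsilon = \sqrt\mu f^\varepsilon$) by $f^\varepsilon$ and integrate in $(x,v)$, then estimate each resulting term. First I would rewrite \eqref{remainder} in terms of $f^\varepsilon$. Since $\mu$ is a \emph{local} Maxwellian, the substitution produces the extra streaming term $\tfrac{1}{\sqrt\mu}(\partial_t + v\cdot\nabla_x)\sqrt\mu \cdot f^\varepsilon$, whose coefficient is a polynomial in $v$ (at most quadratic) with coefficients built from $\partial_t,\nabla_x$ of $(\rho,\mathfrak u,T)$ — hence bounded on $[0,\tau]\times\Omega$ by smoothness and the positive lower bound on $\rho$. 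The linearized-collision part yields $-\tfrac{1}{\varepsilon}\langle \mathcal{L}f^\varepsilon, f^\varepsilon\rangle$, which by the coercivity estimate \eqref{coercivity} is bounded above by $-\tfrac{c_0}{\varepsilon}\|\{\mathbf I - \mathbf P\}f^\varepsilon\|_\nu^2$; absorbing a factor $1/2$ into the left side leaves a spare $\tfrac{c_0}{2\varepsilon}\|\{\mathbf I-\mathbf P\}f^\varepsilon\|_\nu^2$ to help bound the remaining terms.

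Next I would group the right-hand source terms of \eqref{remainder} into: (i) the linear-in-$f^\varepsilon$ terms $\sum_{i=1}^5 \varepsilon^{i-1}\{\mathcal Q(F_i,F_R^\varepsilon)+\mathcal Q(F_R^\varepsilon,F_i)\}$ together with the streaming term; (ii) the inhomogeneous term $\varepsilon^2 A$; and (iii) the quadratic term $\varepsilon^2 \mathcal Q(F_R^\varepsilon,F_R^\varepsilon)$. For (i), after dividing by $\sqrt\mu$ these become $-\Gamma$-type bilinear operators paired against $f^\varepsilon$; using the standard bound $|\langle \Gamma(g_1,g_2), g_3\rangle| \lesssim \|g_1\|_\nu\|g_2\|_\nu \|g_3\|_\nu$ (with appropriate velocity weights absorbed into the smooth, rapidly-decaying $F_i/\sqrt\mu$) and Cauchy–Schwarz, one splits $f^\varepsilon = \mathbf P f^\varepsilon + \{\mathbf I - \mathbf P\}f^\varepsilon$. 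The hydrodynamic part $\mathbf P f^\varepsilon$ contributes $O(1)\cdot\|f^\varepsilon\|_2^2$, while the microscopic part $\{\mathbf I - \mathbf P\}f^\varepsilon$ is controlled by $\tfrac{c_0}{4\varepsilon}\|\{\mathbf I - \mathbf P\}f^\varepsilon\|_\nu^2 + C\varepsilon\|f^\varepsilon\|_2^2$ — the $\varepsilon$-gain from the coefficients $\varepsilon^{i-1}$, $i\ge 1$, is exactly what makes these harmless. Term (ii): $A$ is a smooth, rapidly decaying function independent of $f^\varepsilon$ (built from $F_1,\dots,F_6$ and their derivatives), so $\varepsilon^2\langle A/\sqrt\mu, f^\varepsilon\rangle \lesssim \|f^\varepsilon\|_2 \lesssim \|f^\varepsilon\|_2^2 + \|f^\varepsilon\|_2$ — this is the source of the stand-alone ``$+1$'' and the linear-in-$\|f^\varepsilon\|_2$ term on the right.

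\textbf{The main obstacle} is term (iii), the cubic term $\varepsilon^2\langle \mathcal Q(F_R^\varepsilon,F_R^\varepsilon)/\sqrt\mu, f^\varepsilon\rangle = \varepsilon^2\langle \Gamma(\sqrt\mu^{-1}F_R^\varepsilon, f^\varepsilon), f^\varepsilon\rangle$ together with the high-velocity (``$|v|^3 f^\varepsilon$'') contribution flagged in the introduction. Here $F_R^\varepsilon = \varepsilon^{-3/2}\cdot\varepsilon^{3/2}F_R^\varepsilon$, and $\sqrt\mu^{-1}F_R^\varepsilon = \tfrac{\sqrt{\mu_M}}{w\sqrt\mu}h^\varepsilon$, so the bound \eqref{bound} gives $\sqrt{\mu_M}/\sqrt\mu \leq c_2\mu_M^{\alpha - 1/2}$ with $\alpha > 1/2$, i.e.\ this factor is bounded (indeed decaying). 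Thus $\|\sqrt\mu^{-1}F_R^\varepsilon\|_\infty \lesssim \|h^\varepsilon\|_\infty$ up to the weight, and the cubic term is bounded by $\varepsilon^2 \cdot \varepsilon^{-3/2}\|\varepsilon^{3/2}h^\varepsilon\|_\infty \cdot \|f^\varepsilon\|_\nu^2 = \sqrt\varepsilon\,\|\varepsilon^{3/2}h^\varepsilon\|_\infty\,\|f^\varepsilon\|_\nu^2$. Splitting $f^\varepsilon$ once more, the microscopic piece is absorbed by the remaining $\tfrac{c_0}{4\varepsilon}\|\{\mathbf I-\mathbf P\}f^\varepsilon\|_\nu^2$ (since $\sqrt\varepsilon \cdot \varepsilon \ll 1$), and the hydrodynamic piece gives $\sqrt\varepsilon\|\varepsilon^{3/2}h^\varepsilon\|_\infty\|f^\varepsilon\|_2^2$ — matching the coefficient on the right of \eqref{l2estimate}. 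The delicate point throughout is that all velocity weights generated by $|v-u|^\gamma$ and by the polynomial streaming coefficient must be controlled: for $\gamma \le 1$ they are dominated by the Gaussian decay of the $F_i$ and of $\mu_M^{\alpha-1/2}$, while for soft potentials $-3<\gamma<0$ one invokes the cutoff trick of \cite{SG} to handle the singularity of $|v-u|^\gamma$ near $v=u$. Collecting all estimates, moving the absorbed dissipation to the left, yields \eqref{l2estimate} with $C = C(\mu, F_0,\dots,F_6)$.
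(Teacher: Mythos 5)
Your proposal has the right skeleton (multiply the $f^\varepsilon$-equation by $f^\varepsilon$, use coercivity, split $f^\varepsilon=\mathbf{P}f^\varepsilon+\{\mathbf{I}-\mathbf{P}\}f^\varepsilon$) and your treatment of the quadratic term $\varepsilon^2\Gamma(f^\varepsilon,f^\varepsilon)$, the $\Gamma(F_i/\sqrt\mu,\cdot)$ terms, and the inhomogeneous $\varepsilon^2\bar A$ term matches the paper. However, there is a genuine gap in your handling of the streaming term $\langle\tfrac{\{\partial_t+v\cdot\nabla_x\}\sqrt\mu}{\sqrt\mu}f^\varepsilon,f^\varepsilon\rangle$, which is precisely the ``$|v|^3 f^\varepsilon$'' obstacle the authors flag in the introduction. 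First, the coefficient is a \emph{cubic} polynomial in $v$, not ``at most quadratic'': the contribution $\tfrac{|v-\mathfrak u|^2}{4T^2}\,v\cdot\nabla_x T$ from $v\cdot\nabla_x\log\sqrt\mu$ grows like $|v|^3$. Second, you group this term with the $\Gamma$-type linear terms and propose to bound it by $\|f^\varepsilon\|_\nu^2$ and then absorb the microscopic piece; this fails, because $|\langle p(v)f^\varepsilon,f^\varepsilon\rangle|\lesssim\|f^\varepsilon\|_\nu^2$ is false when $p(v)\sim|v|^3$ while $\nu(v)\sim(1+|v|)^\gamma$ with $\gamma\le 1$. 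The projection splitting alone does nothing to control the polynomial weight, and no $\varepsilon$-power is available from this term to help.

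What the paper does instead, and what is missing from your sketch, is a velocity-domain decomposition at $|v|=\kappa/\varepsilon^{a}$ with $a=1/(3-\gamma)$. On $\{|v|\geq\kappa/\varepsilon^a\}$ one converts $(1+|v|^2)^{3/2}f^\varepsilon$ into $(1+|v|^2)^{\gamma-3}h^\varepsilon$ using \eqref{h}, \eqref{bound}, and the choice $\beta\ge\tfrac{9-2\gamma}{2}$, which after $L^\infty$ estimation produces $C_\kappa\varepsilon^2\|h^\varepsilon\|_\infty\|f^\varepsilon\|_2=C_\kappa\sqrt\varepsilon\|\varepsilon^{3/2}h^\varepsilon\|_\infty\|f^\varepsilon\|_2$ (this is the actual entry point of the $L^2\mbox{-}L^\infty$ interplay into the $L^2$ estimate). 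On $\{|v|\leq\kappa/\varepsilon^a\}$ one has $(1+|v|^2)^{3/2}\lesssim(\kappa/\varepsilon^a)^{3-\gamma}(1+|v|)^\gamma=\tfrac{\kappa^{3-\gamma}}{\varepsilon}(1+|v|)^\gamma$, so the microscopic piece is $\tfrac{C\kappa^{3-\gamma}}{\varepsilon}\|\{\mathbf{I}-\mathbf{P}\}f^\varepsilon\|_\nu^2$, absorbable by the dissipation only after choosing $\kappa$ small. Without this two-region splitting the differential inequality does not close. A secondary point: in your treatment of $\varepsilon^2\Gamma(f^\varepsilon,f^\varepsilon)$ you absorb $\sqrt\varepsilon\|\varepsilon^{3/2}h^\varepsilon\|_\infty\|\{\mathbf{I}-\mathbf{P}\}f^\varepsilon\|_\nu^2$ into $\tfrac{c_0}{4\varepsilon}\|\{\mathbf{I}-\mathbf{P}\}f^\varepsilon\|_\nu^2$, which implicitly assumes an a priori size for $\|\varepsilon^{3/2}h^\varepsilon\|_\infty$; the paper avoids this circularity by estimating this term directly as $C\varepsilon^2\|\nu(\mu)f^\varepsilon\|_\infty\|f^\varepsilon\|_2^2\le C\sqrt\varepsilon\|\varepsilon^{3/2}h^\varepsilon\|_\infty\|f^\varepsilon\|_2^2$ with no projection splitting and no absorption, leaving $\|\varepsilon^{3/2}h^\varepsilon\|_\infty$ visible in the final inequality as stated.
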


\begin{lemma}
\label{linfty}\emph{($L^{\infty }\mbox{-}$Estimate):} Let $(\rho ,\mathfrak{u%
},T)$ be a smooth solution to the Euler equations such that $\rho $ has a
positive lower bound and $T$ satisfies the condition \eqref{assumption-T}.
Let $f^{\eps}\,,h^{\eps}$ and $c_{0}>0$ be the same as in Lemma \ref{l2}.
Then there exist $\varepsilon _{0}>0$ and a positive constant $C=C(\mu
\,,c_{0},F_{1}\,,\cdots \,,F_{6})>0$, such that for all $\varepsilon
<\varepsilon _{0}$
\begin{equation}
\sup_{0\leq s\leq \tau }\{\varepsilon ^{3/2}\Vert h^{\varepsilon }(s)\Vert
_{\infty }\}\leq C\{\Vert \varepsilon ^{3/2}h_{0}\Vert _{\infty
}+\sup_{0\leq s\leq \tau }\Vert f^{\varepsilon }(s)\Vert _{2}+\varepsilon
^{7/2}\}.
\end{equation}
\end{lemma}

The proof of Theorem \ref{Main-Theorem} is a direct consequence of Lemmas %
\ref{l2} and \ref{linfty}.

\begin{proof}\textit{of Theorem \ref{Main-Theorem}:}
\begin{equation*}
\begin{aligned} &\frac{d}{dt}\|f^{\varepsilon }\|_{2}^{2}+\frac{c
_{0}}{2\varepsilon }\|\{\mathbf{I}-\mathbf{P}\}f^{\varepsilon
}\|_{\nu }^{2}\\ \leq & C\left\{\sqrt{\varepsilon
}\left[\|\varepsilon ^{3/2}h_{0}\|_{\infty }+\sup_{0\leq s\leq \tau
}\|f^{\varepsilon }(s)\|_{2}+\varepsilon
^{7/2}\right]+1\right\}\left(\|f^{\varepsilon
}\|_{2}^{2}+\|f^{\varepsilon }\|_{2}\right). \end{aligned}
\end{equation*}%
A simple Gronwall inequality yields%
\begin{equation*}
\|f^{\varepsilon }(t)\|_{2}+1\leq (\|f^{\varepsilon }(0)\|_{2} +1)e^{Ct\{2+%
\sqrt{\varepsilon }\|\varepsilon ^{3/2}h_{0}\|_{\infty }+\sqrt{\varepsilon }%
\sup_{0\leq s\leq \tau }\|f^{\varepsilon }(s)\|_{2}\}}\,.
\end{equation*}%
For $\varepsilon$ small, using the Taylor expansion of the
exponential function in the above inequality, we have
\begin{equation}
\| f^\eps\|_2 \leq C_1 (\| f^\eps(0)\|_2+1)\left\{ 1+\sqrt{\varepsilon }%
\|\varepsilon ^{3/2}h_{0}\|_{\infty } + \sqrt{\varepsilon
}\sup_{0\leq s\leq \tau }\|f^{\varepsilon }(s)\|_{2} \right\}\,.
\end{equation}
For $t\leq \tau ,$ letting $\varepsilon $ small, we conclude the
proof of our main theorem as:
\begin{equation*}
\sup_{0\leq t\leq \tau }\|f^{\varepsilon }(t)\|_{2}\leq C_{\tau
}\{1+\|f^{\varepsilon }(0)\|_{2}+\|\varepsilon ^{3/2}h_{0}\|_{\infty
}\}.
\end{equation*}
\end{proof}

\subsection{$L^{2}$ Estimate For $f^{\protect\varepsilon }$}

\begin{proof}\textit{of Lemma} \ref{l2}: In terms of $f^{\varepsilon },$ we obtain%
\begin{equation*}
\begin{split}
&\partial _{t}f^{\varepsilon }+v\cdot\nabla _{x}f^{\varepsilon }+\frac{1}{%
\varepsilon }{\mathcal{L}} f^{\varepsilon } \\
&=\frac{\{\partial _{t}+v\cdot \nabla _{x}\}\sqrt{\mu }}{\sqrt{\mu }}%
f^{\varepsilon }+\varepsilon ^{2}\Gamma (f^{\varepsilon
},f^{\varepsilon
})+\sum_{i=1}^5\varepsilon^{i-1}
\{\Gamma (\frac{F_{i}}{\sqrt{\mu }}%
,f^{\varepsilon })+\Gamma (f^{\varepsilon },\frac{F_{i}}{\sqrt{\mu
}})\}+\varepsilon ^{2}\bar{A}
\end{split}
\end{equation*}%
where $\bar{A}=-\frac{\{\partial _{t}+v\cdot \nabla _{x}\}F_{5}}{\sqrt{\mu }}%
+\sum_{i+j\geq 6,i\leq 5,j\leq 5}\varepsilon ^{i+j-6}\Gamma (\frac{F_{i}}{%
\sqrt{\mu }},\frac{F_{j}}{\sqrt{\mu }}).$

Taking $L^{2}$ inner product with $f^{\varepsilon }$ on both sides, since $%
\frac{\{\partial _{t}+v\cdot \nabla _{x}\}\sqrt{\mu }}{\sqrt{\mu }}$
is a cubic polynomial in $v,$ we have for any $\kappa >0$ and
$a=1/(3-\gamma)$,
\begin{equation*}
\begin{split}
&\left\langle \frac{\{\partial _{t}+v\cdot \nabla _{x}\}\sqrt{\mu }}{\sqrt{%
\mu }}f^{\varepsilon },f^{\varepsilon }\right\rangle \\
&=\int_{|v|\geq \frac{\kappa }{{\varepsilon^a }}}+\int_{|v|\leq \frac{%
\kappa }{{\varepsilon^a }}} \\
&\leq \{\|\nabla _{x}\rho \|_{2}+\|\nabla
_{x}\mathfrak{u}\|_{2}+\|\nabla _{x}T\|_{2}\}\times
\|\{1+|v|^{2}\}^{3/2}f^{\varepsilon }\mathbf{1}_{|v|\geq
\frac{\kappa }{{\varepsilon^a }}}\|_{\infty }\times \|f^{\varepsilon
}\|_{2} \\
&\;\;\;+\{\|\nabla _{x}\rho \|_{\infty }+\|\nabla
_{x}\mathfrak{u}\|_{\infty }+\|\nabla
_{x}T\|_{\infty }\}\times \|\{1+|v|^{2}\}^{3/4}f^{\varepsilon }\mathbf{1}%
_{|v|\leq \frac{\kappa }{{\varepsilon^a }}}\|_{2}^{2} \\
&\leq C_{\kappa }\varepsilon ^{2}\|h^{\varepsilon }\|_{\infty
}\|f^{\varepsilon }\|_{2}\\
&\;\;\;+C\|\{1+|v|^{2}\}^{3/4}\mathbf{P}f^{\varepsilon }%
\mathbf{1}_{|v|\leq \frac{\kappa }{{\varepsilon^a }}}\|_{2}^{2}+C\|%
\{1+|v|^{2}\}^{3/4}\{\mathbf{I}-\mathbf{P}\}f^{\varepsilon }\mathbf{1}%
_{|v|\leq \frac{\kappa }{{\varepsilon^a }}}\|_{2}^{2} \\
&\leq C_{\kappa }\varepsilon ^{2}\|h^{\varepsilon }\|_{\infty
}\|f^{\varepsilon }\|_{2}+C\|f^{\varepsilon }\|_{2}^{2}+\frac{C\kappa ^{3-\gamma}}{%
\varepsilon }\|\{\mathbf{I}-\mathbf{P}\}f^{\varepsilon }\|_{\nu
}^{2}.
\end{split}
\end{equation*}%
Here we have used the fact $\{1+|v|^{2}\}^{3/2}f^{\varepsilon }\leq
\{1+|v|^{2}\}^{\gamma-3}h^{\varepsilon },$ for $\beta \geq
3/2+(3-\gamma)$ in (\ref{h}), and the fact $\mu_M<C\mu$ in
\eqref{bound} under the assumption \eqref{assumption-T}.

By the same proof as in Lemma 2.3 of \cite{G1} and (\ref{h}),
\begin{equation*}
\varepsilon ^{2}\langle \Gamma (f^{\varepsilon },f^{\varepsilon
}),f^{\varepsilon }\rangle \leq C\varepsilon ^{2}\{\|\nu (\mu
)f^{\varepsilon }\|_{\infty }\}\|f^{\varepsilon }\|_{2}^{2}\leq C\sqrt{%
\varepsilon }\|\varepsilon ^{3/2}h^{\varepsilon }\|_{\infty
}^{{}}\|f^{\varepsilon }\|_{2}^{2}.
\end{equation*}%
Similarly, by the same proof as in Lemma 2.3 of \cite{G1} and
(\ref{h}),
\begin{equation*}
\begin{split}
&\sum_{i=1}^5\varepsilon^{i-1}\{\langle \Gamma (\frac{ F_{i}}{\sqrt{%
\mu }},f^{\varepsilon }),f^{\varepsilon }\rangle +\langle \Gamma
(f^{\varepsilon },\frac{ F_{i}}{\sqrt{%
\mu }}),f^{\varepsilon }\rangle\} \\
&\leq C\sum_{i=1}^5\varepsilon^{i-1}
\|f^{\varepsilon }\|_{\nu }^{2}\|\int_{\mathbf{R}^{3}}\frac{%
F_{i}}{\sqrt{\mu }}dv\|_{\infty } \\
&\leq C\{\|\mathbf{P}f^{\varepsilon }\|_{\nu }^{2}+\|\{\mathbf{I}-\mathbf{P}%
\}f^{\varepsilon }\|_{\nu }^{2}\} \\
&\leq C\{\|f^{\varepsilon }\|_{2}^{2}+\|\{\mathbf{I}-\mathbf{P}%
\}f^{\varepsilon }\|_{\nu }^{2}\}.
\end{split}
\end{equation*}%
Clearly, $\langle \varepsilon ^{2}\bar{A},f^{\varepsilon }\rangle
\leq
C\|f^{\varepsilon }\|_{2}.$ We therefore conclude our lemma by choosing $%
\kappa $ small.
\end{proof}

\subsection{$L^{\infty }$ Estimate For $h^{\protect\varepsilon }$}

As in \cite{Caf}, we define
\begin{equation*}
{\mathcal{L}}_{M}g=-\frac{1}{\sqrt{\mu _{M}}}\{\mathcal{Q}(\mu ,\sqrt{\mu
_{M}}g)+\mathcal{Q}(\sqrt{\mu _{M}}g,\mu )\}=\{\nu (\mu )+K\}g,
\end{equation*}%
where $Kg=K_1g-K_2g$ with
\begin{equation*}
\begin{split}
K_1g&= \int_{\mathbf{B}^3\times \mathbf{S}^2} B(\theta)|u-v|^\gamma \sqrt{%
\mu_M(u)}\frac{\mu(v)}{\sqrt{\mu_M(v)}} g(u) dud\omega \\
K_2g&= \int_{\mathbf{B}^3\times \mathbf{S}^2} B(\theta)|u-v|^\gamma {%
\mu(u^{\prime})}\frac{\sqrt{\mu_M(v^{\prime})}}{\sqrt{\mu_M(v)}}
g(v^{\prime}) dud\omega \\
&+\int_{\mathbf{B}^3\times \mathbf{S}^2} B(\theta)|u-v|^\gamma {%
\mu(v^{\prime})}\frac{\sqrt{\mu_M(u^{\prime})}}{\sqrt{\mu_M(v)}}
g(u^{\prime}) dud\omega\,.
\end{split}%
\end{equation*}
Consider a smooth cutoff function $0\leq \chi_m\leq 1$ such that for any $%
m>0 $,
\begin{equation*}
\chi_m(s)\equiv 1, \text{ for } s\leq m \,;\; \chi_m(s)\equiv 0, \text{ for }%
s\geq 2m.
\end{equation*}
Then define
\begin{equation*}
\begin{split}
K^mg&= \int_{\mathbf{B}^3\times \mathbf{S}^2} B(\theta)|u-v|^\gamma
\chi_m(|u-v|) \sqrt{\mu_M(u)}\frac{\mu(v)}{\sqrt{\mu_M(v)}} g(u) dud\omega \\
&- \int_{\mathbf{B}^3\times \mathbf{S}^2} B(\theta)|u-v|^\gamma\chi_m(|u-v|)
{\mu(u^{\prime})}\frac{\sqrt{\mu_M(v^{\prime})}}{\sqrt{\mu_M(v)}}
g(v^{\prime}) dud\omega \\
&-\int_{\mathbf{B}^3\times \mathbf{S}^2} B(\theta)|u-v|^\gamma\chi_m(|u-v|) {%
\mu(v^{\prime})}\frac{\sqrt{\mu_M(u^{\prime})}}{\sqrt{\mu_M(v)}}
g(u^{\prime}) dud\omega\,,
\end{split}%
\end{equation*}
and also define
\begin{equation*}
K^c g=K-K^m.
\end{equation*}

\begin{lemma}
\begin{equation}  \label{ks}
|K^mg(v)|\leq Cm^{3+\gamma}\nu(\mu)||g||_\infty.
\end{equation}
And $K^c g(v)=\int_{\mathbf{R}^3} l(v,{v}^{\prime})g({v}^{\prime}) d{v}%
^{\prime}$ where the kernel $l$ satisfies for some $c>0$,
\begin{equation}  \label{lestimate}
l(v,{v}^{\prime})\leq C_m\frac{\exp\{-c|v-{v}^{\prime}|^2\}}{|v-{v}%
^{\prime}| (1+|v|+|{v}^{\prime}|)^{1-\gamma}}.
\end{equation}
\end{lemma}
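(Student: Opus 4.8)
The plan is to prove the two bounds \eqref{ks} and \eqref{lestimate} by adapting the classical estimates for the linearized Boltzmann kernel (as in Grad, and in \cite{G1,G2,Caf}) to the present situation where the ``middle'' Maxwellian is the local Maxwellian $\mu$ while the weights are built from the global Maxwellian $\mu_M$. The key point that makes everything go through is the pointwise comparison \eqref{bound}, namely $c_1\mu_M\le\mu\le c_2\mu_M^\alpha$ with $1/2<\alpha<1$, which lets us convert every occurrence of $\mu$ into a genuine Gaussian in $v$ (and $v'$, $u'$, etc.) with a slightly smaller decay rate, at the cost of constants depending on $T_M$ and $\max T$.

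For \eqref{ks}: on the support of $\chi_m(|u-v|)$ we have $|u-v|\le 2m$, so in the gain-type term $|u-v|^\gamma\chi_m(|u-v|)\sqrt{\mu_M(u)}\,\mu(v)/\sqrt{\mu_M(v)}$, I would first pull out $\|g\|_\infty$, then bound $\sqrt{\mu_M(u)}\le C\sqrt{\mu_M(v)}e^{c|u-v|^2}\le C\sqrt{\mu_M(v)}$ (since $|u-v|$ is bounded), which cancels the $\sqrt{\mu_M(v)}$ in the denominator up to a constant and leaves $\mu(v)\le C\nu(\mu)(v)$ because $\mu(v)$ decays faster than $(1+|v|)^\gamma$; the remaining angular/velocity integral of $|u-v|^\gamma\chi_m(|u-v|)$ over $|u-v|\le 2m$ is $\le C\int_0^{2m} r^{\gamma}r^2\,dr=Cm^{3+\gamma}$, using $-3<\gamma$ so the integral converges at $r=0$, and $B(\theta)\le C|\cos\theta|$ is bounded. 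For the two loss-type terms one uses the standard change of variables $u\mapsto u'$ (or $u\mapsto v'$) on the sphere, together with the Carleman-type representation, noting that $|u-v|=|u'-v'|$ and $|v-v'|\le|u-v|\le 2m$, $|u-u'|\le|u-v|\le 2m$, so $\mu(u')/\sqrt{\mu_M(v)}$ etc.\ are again controlled by $C\nu(\mu)(v)$ after using \eqref{bound}; the leftover kernel integrates to $Cm^{3+\gamma}$ as before. Summing the three pieces gives \eqref{ks}.

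For \eqref{lestimate}: here $K^c=K-K^m$ has the cutoff $1-\chi_m(|u-v|)$, which is supported on $|u-v|\ge m$, so there is no singularity at small relative velocity and the classical Grad computation applies. I would write $K^cg=\int l(v,v')g(v')\,dv'$ by performing the Carleman change of variables that turns the gain terms into integrals over the hyperplane through $v'$ perpendicular to $v-v'$; using \eqref{bound} to replace $\mu$ by a Gaussian in $\mu_M^\alpha$, one gets exactly the Grad kernel structure, and the standard estimate yields $l(v,v')\le C_m|v-v'|^{-1}(1+|v|+|v'|)^{-(1-\gamma)}\exp\{-c_*(|v-v'|^2+(|v|^2-|v'|^2)^2/|v-v'|^2)\}$; discarding the second term in the exponent (it is nonnegative) gives \eqref{lestimate}. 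The constant $C_m$ blows up as $m\to0$ because the cutoff $1-\chi_m$ removes a neighborhood of $|u-v|=0$ but the classical kernel estimate picks up a factor like $m^{\gamma-1}$ or worse from the region $|u-v|\sim m$; this dependence is acceptable since in the application $m$ is a fixed (small but positive) parameter.

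The main obstacle is bookkeeping the shift between $\mu$ and $\mu_M$ consistently across the three terms of $K$ and through each change of variables: one must check that the Gaussian factors, after invoking $c_1\mu_M\le\mu\le c_2\mu_M^\alpha$ and the elementary inequality $\mu_M(a)\le C\mu_M(b)\exp\{c|a-b|^2\}$, recombine into a single Gaussian in $|v-v'|$ with a strictly positive rate $c$ — the constraint $\alpha>1/2$ is exactly what guarantees that the exponent of the surviving Gaussian stays positive after the algebra with $\sqrt{\mu_M(v)}^{-1}$ in the denominators. Everything else is a routine reduction to the well-documented estimates for the Grad kernel; I would cite \cite{G1,G2,Caf} for those and only carry out in detail the two new points, namely the $\mu$-versus-$\mu_M$ comparison and the extraction of the $m^{3+\gamma}$ factor from the small-relative-velocity region.
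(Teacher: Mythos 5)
Your proposal is correct and follows essentially the same route as the paper: for $K^m$ you use the proximity $|u-v|\le 2m$ (hence $u,u',v'$ all within $O(m)$ of $v$) together with the comparison \eqref{bound} to reduce each Maxwellian factor to $C\nu(\mu)(v)$ and extract $\int_{|u-v|\le 2m}|u-v|^\gamma\,du\sim m^{3+\gamma}$, and for $K^c$ you invoke the Carleman change of variables and the classical Grad/Strain--Guo kernel bound, with \eqref{bound} converting the local Maxwellian $\mu$ into a genuine Gaussian. The only cosmetic difference is that the paper routes the bound for the $K_2^m$ piece through the collision invariant $|u|^2+|v|^2=|u'|^2+|v'|^2$ and explicitly cites \cite{SG} for the soft-potential kernel estimate, whereas you argue directly via $|u'-v|,|v'-v|=O(m)$ and cite the Grad-type references; both are sound.
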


\begin{proof} Since $\mu\leq C\mu_M^\alpha$ for $\alpha>\frac12$ and
$|u|^2+|v|^2=|u'|^2+|v'|^2$, we first have
\[
\begin{split}
\sqrt{\mu_M(u)}\frac{\mu(v)}{\sqrt{\mu_M(v)}}&\leq C\sqrt{\mu_M(u)}
\mu_M^{\alpha-\frac12}(v), \\
{\mu(u')}\frac{\sqrt{\mu_M(v')}}{\sqrt{\mu_M(v)}}
+{\mu(v')}\frac{\sqrt{\mu_M(u')}}{\sqrt{\mu_M(v)}}&\leq
C\{\mu^{\alpha-\frac12}(u')\mu_M^{\frac12}(u)+\mu_M^{\alpha-\frac12}(v')
\mu_M^{\frac12}(u)\}.
\end{split}
\]
Since $|v-u|\leq 2m$, $\mu_M(u)\sim\mu_M(v)$ and thus
$\mu_M^{\frac12}(u)\leq C \nu(\mu)$. And since $\gamma>-3$,
\eqref{ks} follows.

To show (\ref{lestimate}), clearly the kernel for $K_{1}^c$
satisfies (\ref{lestimate}), since $\alpha>\frac12$. For $K_{2}^c,$
we can use the Carleman change of variable and apply the proof of
Lemma 1 in \cite{SG} (one can extend the result to cover all
$-3<\gamma \leq 1$).
\end{proof}

We are now ready to prove Lemma \ref{linfty}.

\begin{proof}
\textit{of Lemma} \ref{linfty}:
Letting $K_{w}g\equiv wK(\frac{g}{w}),$ from (\ref{remainder}) and (%
\ref{h}), we obtain%
\begin{equation*}
\begin{split}
&\partial _{t}h^{\varepsilon }+v\cdot \nabla _{x}h_{{}}^{\varepsilon }+%
\frac{\nu (\mu )}{\varepsilon }h^{\varepsilon }+\frac{1}{\varepsilon }%
K_{w}h^{\varepsilon } \\
=&\,\frac{\varepsilon ^{2}w}{\sqrt{\mu
_{M}}}\mathcal{Q}(\frac{h^{\varepsilon }\sqrt{\mu
_{M}}}{w},\frac{h^{\varepsilon }\sqrt{\mu _{M}}}{w})+\sum_{i=1}^5
\varepsilon^{i-1}\frac{w}{\sqrt{\mu_M}} \{\mathcal{Q}(F_i,\frac{%
h^{\varepsilon }\sqrt{\mu _{M}}}{w})+\mathcal{Q}(\frac{h^{\varepsilon }\sqrt{%
\mu _{M}}}{w}, F_i)\}\\
&+\varepsilon ^{2}\tilde{A},
\end{split}
\end{equation*}%
where $\tilde{A}=-\frac{w\{\partial _{t}+v\cdot \nabla _{x}\}F_{5}}{\sqrt{%
\mu _{M}}}+\sum_{i+j\geq 6,i\leq 5,j\leq 5}\varepsilon ^{i+j-6}\frac{w}{%
\sqrt{\mu _{M}}}\mathcal{Q}(F_{i},F_{j}).$

By Duhamel's principle, we have $h^{\varepsilon }(t,x,v)=$
\begin{equation}
\begin{split}
&\exp \{-\frac{\nu t}{\varepsilon }\}h^{\varepsilon
}(0,x-vt,v)-\int_{0}^{t}\exp \{-\frac{\nu (t-s)}{\varepsilon }\}\left( \frac{%
1}{\varepsilon }K^m_{w}h^{\varepsilon }\right) (s,x-v(t-s),v)ds   \\
&-\int_{0}^{t}\exp \{-\frac{\nu (t-s)}{\varepsilon }\}\left( \frac{%
1}{\varepsilon }K^c_{w}h^{\varepsilon }\right) (s,x-v(t-s),v)ds\\
&+\int_{0}^{t}\exp \{-\frac{\nu (t-s)}{\varepsilon }\}\left(\frac{%
\varepsilon ^{2}w}{\sqrt{\mu _{M}}}\mathcal{Q}(\frac{h^{\varepsilon }\sqrt{%
\mu _{M}}}{w},\frac{h^{\varepsilon }\sqrt{\mu _{M}}}{w}%
)\right)(s,x-v(t-s),v)ds   \\
&+\int_{0}^{t}\exp \{-\frac{\nu (t-s)}{\varepsilon
}\}\left(\sum_{i=1}^5 \varepsilon^{i-1}\frac{w}{\sqrt{\mu_M}}
\mathcal{Q}(F_i,\frac{h^{\varepsilon
}\sqrt{\mu _{M}}}{w})\right)(s,x-v(t-s),v)ds  \\
&+\int_{0}^{t}\exp \{-\frac{\nu (t-s)}{\varepsilon
}\}\left(\sum_{i=1}^5
\varepsilon^{i-1}\frac{w}{\sqrt{\mu_M}} \mathcal{Q}(\frac{h^{\varepsilon }%
\sqrt{\mu _{M}}}{w}, F_i)\right)(s,x-v(t-s),v)ds  \\
&+\int_{0}^{t}\exp \{-\frac{\nu (t-s)}{\varepsilon }\}\varepsilon ^{2}%
\tilde{A}(s,x-v(t-s),v)ds.
\end{split} \label{duhamel}
\end{equation}
First note that
\begin{equation*}
\begin{split}
\nu (\mu ) \sim\int |v-u|^\gamma \mu du\sim (1+|v|)^\gamma\rho
(t,x)\sim
\nu _{M}(v), \\
\int_{0}^{t}\exp \{-\frac{\nu (\mu )(t-s)}{\varepsilon }\}\nu (\mu
)ds \leq c\int_{0}^{t}\exp \{-\frac{c\nu _{M}(t-s)}{\varepsilon
}\}\nu _{M}ds=O(\varepsilon ).
\end{split}
\end{equation*}
Then from (\ref{ks}), the second term in \eqref{duhamel} is bounded
by
\begin{equation*}
Cm^{3+\gamma }\int_{0}^{t}\exp \{-\frac{\nu (t-s)}{\varepsilon
}\}\nu ds\sup_{0\leq t\leq \tau }||h^{\varepsilon }(t)||_{\infty
}\leq Cm^{3+\gamma}\varepsilon\sup_{0\leq t\leq \tau
}||h^{\varepsilon }(t)||_{\infty}.
\end{equation*}%
By $\mu _{M}\leq C\mu ,$ and since $|\frac{w}{\sqrt{\mu _{M}}}
Q(\frac{h^{\varepsilon }%
\sqrt{\mu _{M}}}{w},\frac{h^{\varepsilon }\sqrt{\mu _{M}}}{w})|\leq
C\nu (\mu )\|h^{\varepsilon }\|_{\infty }^{2}$  from the same proof
as in Lemma 10 of \cite{G2}, the third line in (\ref{duhamel}) is
bounded by
\begin{equation}
\begin{split}
&C\varepsilon ^{2}\int_{0}^{t}\exp \{-\frac{\nu (\mu )(t-s)}{\varepsilon }%
\}\nu (\mu )\|h^{\varepsilon }(s)\|_{\infty }^{2}ds   \label{h2} \\
\leq\;& C\varepsilon ^{3}\sup_{0\leq s\leq t}\|h^{\varepsilon
}(s)\|_{\infty }^{2}.
\end{split}
\end{equation}%
 From the same proof as in Lemma 10 of \cite{G2} again,
\begin{equation*}
\begin{split}
&\sum_{i=1}^5 \varepsilon^{i-1}\frac{w}{\sqrt{\mu_M}}\{ \mathcal{Q}(F_i,\frac{%
h^{\varepsilon }\sqrt{\mu _{M}}}{w})+\mathcal{Q}(\frac{h^{\varepsilon }\sqrt{%
\mu _{M}}}{w},F_i)\}\\
& \leq \nu _{M}(v)\|h^{\varepsilon }\|_\infty \|\frac{w}{%
\sqrt{\mu _{M}}}\sum_{i=1}^5 \varepsilon^{i-1}F_{i}\|_{\infty }
\end{split}
\end{equation*}%
so that the fourth and fifth lines in (\ref{duhamel}) are bounded by
\begin{equation}
C\int_{0}^{t}\exp \{-\frac{\nu (\mu )(t-s)}{\varepsilon }\}\nu
_{M}(v)\|h^{\varepsilon }(s)\|_{\infty }ds\leq C\varepsilon
\sup_{0\leq s\leq t}\|h^{\varepsilon }(s)\|_{\infty }.  \label{h1}
\end{equation}
The last line in (\ref{duhamel}) is clearly bounded by $C\varepsilon
^{3}.$

We shall mainly concentrate on the third term in the right hand side of (%
\ref{duhamel}). Let $l_{w}(v,v^{\prime })$ be the corresponding
kernel associated with $K_{w}^c$. Recalling \eqref{lestimate}, we
have
\begin{equation}
|l_{w}(v,v^{\prime })|\leq \frac{Cw(v^{\prime })\exp
\{-c|v-v^{\prime
}|^{2}\}}{|v-v^{\prime }|w(v)(1+|v|+|v^{\prime }|)^{1-\gamma }}\leq \frac{%
C\exp \{-\widetilde{c}|v-v^{\prime }|^{2}\}}{|v-v^{\prime
}|(1+|v|+|v^{\prime }|)^{1-\gamma }} \label{k}
\end{equation}
with a smaller $\widetilde{c}>0.$ Since $\nu (\mu )$ $\sim \nu
_{M},$ we bound the second line in \eqref{duhamel} by
\begin{equation*}
\frac{1}{\varepsilon }\int_{0}^{t}\exp \{-\frac{\nu (t-s)}{\varepsilon }%
\}\int_{\mathbf{R}^{3}}|l_{w}(v,v^{\prime })h^{\varepsilon
}(s,x-v(t-s),v^{\prime })|dv^{\prime }ds,
\end{equation*}%
We now use (\ref{duhamel}) again to evaluate $h^{\varepsilon }$. By
(\ref{h2}) and (\ref{h1}), we can
bound the above by%
\begin{equation}
\begin{split}
&\frac{1}{\varepsilon }\int_{0}^{t}\exp \{-\frac{\nu (t-s)}{\varepsilon }%
\}\sup_{v}\int_{\mathbf{R}^{3}}|l_{w}(v,v^{\prime })|dv^{\prime }\exp \{-%
\frac{\nu s}{\varepsilon }\}h^{\varepsilon }(0,x-v(t-s)-v^{\prime
}s,v^{\prime })ds  \\
&+\frac{1}{\varepsilon ^{2}}\int_{0}^{t}\exp \{-\frac{\nu
(t-s)}{\varepsilon }\}\int_{\mathbf{R}^{3}}|l_{w}(v,v^{\prime
})|\\
&\times\int_{0}^{s}\exp \{-\frac{\nu (v^{\prime
})(s-s_{1})}{\varepsilon }\}|\{K^{m}h^{\varepsilon
}\}(s_1,x-v(t-s)-v^{\prime
}(s-s_{1}),v^{\prime})|dv^{\prime } ds_1ds \\
&+\frac{1}{\varepsilon ^{2}}\int_{0}^{t}\exp \{-\frac{\nu (t-s)}{%
\varepsilon }\}\int_{\mathbf{R}^{3}\times \mathbf{R}^{3}}|l_{w}(v,v^{%
\prime })l_{w}(v^{\prime },v^{\prime \prime })   \\
&\times \int_{0}^{s}\exp \{-\frac{\nu (v^{\prime })(s-s_{1})}{\varepsilon }%
\}h^{\varepsilon }(s_1,x-v(t-s)-v^{\prime }(s-s_{1}),v^{\prime
\prime
})|dv^{\prime }dv^{\prime \prime }ds_{1}ds \\
&+\frac{C}{\varepsilon }\int_{0}^{t}\exp \{-\frac{\nu (t-s)}{\varepsilon }%
\}ds\times \int_{\mathbf{R}^{3}}|l_{w}(v,v^{\prime })|dv^{\prime
}\times \{\varepsilon ^{3}\sup_{0\leq s\leq t}\|h^{\varepsilon
}(s)\|_{\infty }^{2}\} \\
&+\frac{C}{\varepsilon }\int_{0}^{t}\exp \{-\frac{\nu (t-s)}{\varepsilon }%
\}ds\times\int_{\mathbf{R}^{3}}|l_{w}(v,v^{\prime })|dv^{\prime
}\times \{\varepsilon \sup_{0\leq s\leq t}\|h^{\varepsilon
}(s)\|_{\infty }\}
\\
&+\frac{C}{\varepsilon }\int_{0}^{t}\exp \{-\frac{\nu (t-s)}{\varepsilon }%
\}ds\times\int_{\mathbf{R}^{3}}|l_{w}(v,v^{\prime })|dv^{\prime
}\times \{\varepsilon ^{2}\sup_{0\leq s\leq t}\|\tilde{A}\|_{\infty
}\}.
\end{split}\label{double}
\end{equation}
By (\ref{ks}), the second term is bounded as follows:
\begin{equation*}
\begin{split}
&\frac{Cm^{\gamma +3}}{\varepsilon ^{2}}\sup_{0\leq \tau\leq t
}\|h^{\varepsilon }(\tau)\|_{\infty }\int_{0}^{t}\exp \{-\frac{\nu (t-s)}{%
\varepsilon }\}\int_{\mathbf{R}^{3}}|l_{w}(v,v^{\prime
})|\\
&\quad\times \int_{0}^{s}\exp \{-\frac{\nu (v^{\prime
})(s-s_{1})}{\varepsilon }\}\nu (v^{\prime
})dv^{\prime }ds_1ds \\
&\leq \frac{Cm^{\gamma +3}}{\varepsilon ^{{}}}\sup_{0\leq \tau\leq t
}\|h^{\varepsilon }(\tau )\|_{\infty }\int_{0}^{t}\exp \{-\frac{\nu (t-s)}{%
\varepsilon }\}\int_{\mathbf{R}^{3}}|l_{w}(v,v^{\prime })|dv^{\prime }ds \\
&=Cm^{\gamma +3}\sup_{0\leq \tau\leq t}\|h^{\varepsilon }(\tau
)\|_{\infty },
\end{split}
\end{equation*}
where we have used the fact
 $\int_{\mathbf{R}^{3}}|l_{w}(v,v^{\prime })|dv^{\prime }<\nu (v)$ from (%
\ref{k}).  Similar arguments for other terms except the third term
yield the following bound:%
\begin{equation*}
C\{\|h^{\varepsilon }(0)\|_{\infty }+\varepsilon ^{3}\sup_{0\leq
s\leq t}\|h^{\varepsilon }(s)\|_{\infty }^{2}+\varepsilon
\sup_{0\leq s\leq t}\|h^{\varepsilon }(s)\|_{\infty
}^{{}}+C\varepsilon ^{3}\}.
\end{equation*}

We now concentrate on the third term in (\ref{double}), which will
be estimated as in the proof of Theorem 20 in \cite{G2}.

\textbf{CASE 1:} For $|v|\geq N.$ By (\ref{k}),
\begin{equation*}
\int_{\mathbf{R}^{3}}|l_{w}(v,v^{\prime })|dv^{\prime }\leq C\frac{\nu (v)}{N%
}\text{ \ and \ }\int_{\mathbf{R}^{3}}|l_{w}(v^{\prime },v^{\prime
\prime })|dv^{\prime \prime }\leq C\nu (v^{\prime })
\end{equation*}%
and thus we have the following bound
\begin{equation*}
\begin{split}
&\frac{C}{\varepsilon }\sup_{0\leq s\leq t}\|h^{\varepsilon
}(s)\|_{\infty
}\int_{0}^{t}\int_{\mathbf{R}^{3}}\exp \{-\frac{\nu (v)(t-s)}{\varepsilon }%
\}|l_{w}(v,v^{\prime })|\\
&\quad\quad\quad\times \int_{0}^{s}\exp \{-\frac{\nu (v^{\prime })(s-s_{1})%
}{\varepsilon }\}\frac{\nu (v^{\prime })}{\varepsilon
}ds_{1}dv^{\prime }ds\\
&\leq \frac{C}{N}\sup_{0\leq s\leq t}\|h^{\varepsilon }(s)\|_{\infty
}.
\end{split}
\end{equation*}

\textbf{CASE 2:}\textit{\ }For $|v|\leq N,$ $|v^{\prime }|\geq 2N,$ or $%
|v^{\prime }|\leq 2N$, $|v^{\prime \prime }|\geq 3N.$ Notice that we
have either $|v^{\prime }-v|\geq N$ or $|v^{\prime }-v^{\prime
\prime }|\geq N,$ and either one of the following is valid
correspondingly for some small $\eta >0$:
\begin{equation}
\begin{split}
|l_{w}(v,v^{\prime })|&\leq e^{-\frac{\eta
}{8}N^{2}}|l_{w}(v,v^{\prime
})e^{\frac{\eta }{8}|v-v^{\prime }|^{2}}|,\\
|l_{w}(v^{\prime },v^{\prime \prime })|&\leq e^{-\frac{\eta }{8}%
N^{2}}|l_{w}(v^{\prime },v^{\prime \prime })e^{\frac{\eta
}{8}|v^{\prime }-v^{\prime \prime }|^{2}}|.  \label{kwe}
\end{split}
\end{equation}%
From \eqref{k}, we obtain
\begin{equation*}
\int |l_{w}(v,v^{\prime })e^{\frac{\eta }{8}|v-v^{\prime
}|^{2}}|dv^{\prime
}\leq C\nu (v),\text{ and }\int |l_{w}(v^{\prime },v^{\prime \prime })e^{%
\frac{\eta }{8}|v^{\prime }-v^{\prime \prime }|^{2}}|dv^{\prime
\prime }\leq C\nu (v^{\prime }).
\end{equation*}
We use (\ref{kwe}) to combine the cases of $|v^{\prime }-v|\geq N$ or $%
|v^{\prime }-v^{\prime \prime }|\geq N$ as:
\begin{equation}
\begin{split}
&\int_{0}^{t}\int_{0}^{s}\left\{ \int_{|v|\leq N,|v^{\prime }|\geq
2N}+\int_{|v^{\prime }|\leq 2N,|v^{\prime \prime }|\geq
3N}\right\}\\
&\leq \frac{C_{\eta }}{\varepsilon ^{2}}e^{-\frac{\eta }{8}%
N^{2}}\sup_{0\leq s\leq t}\|h^{\varepsilon }(s)\|_{\infty }
\int_{0}^{t}\int_{0}^{s}\int |l_w(v,v')|
\exp \{-\frac{\nu (v)(t-s)}{\varepsilon }%
\}\\
&\quad\quad\quad\quad\quad\quad\quad\quad\quad\quad\quad\quad\quad\quad\exp
\{-\frac{\nu(v^{\prime})(s-s_{1})}{\varepsilon
}\} \nu(v')dv'ds_1ds  \\
&\leq C_{\eta }e^{-\frac{\eta }{8}N^{2}}\sup_{0\leq s\leq
t}\{\|h^{\varepsilon }(s)\|_{\infty }\}.  \label{inflowstep3}
\end{split}
\end{equation}

\textbf{CASE 3a:}\textit{\ \thinspace } $|v|\leq N,$ $|v^{\prime
}|\leq 2N,|v^{\prime \prime }|\leq 3N.$ This is the last remaining
case
because if $|v^{\prime }|>2N,$ it is included in Case 2; while if $%
|v^{\prime \prime }|>3N,$ either $|v^{\prime }|\leq 2N$ or
$|v^{\prime }|\geq 2N$ are also included in Case 2. We further
assume
that $s-s_1\leq \varepsilon \kappa ,$ for $%
\kappa >0$ small. We bound the third term in (\ref{double}) by
\begin{equation}
\begin{split}
&\frac{1}{\varepsilon ^{2}}\int_{0}^{t}\int_{s-\varepsilon \kappa
}^{s}C\exp \{-\frac{\nu (v)(t-s)}{\varepsilon }\}\exp \{-\frac{%
\nu(v^{\prime})(s-s_{1})}{\varepsilon }\}\|h^{\varepsilon
}(s_1)\|_{\infty
}ds_{1} ds \\
&\leq C_N\sup_{0\leq s\leq t}\{\|h^{\varepsilon }(s)\|_{\infty
}\}\times
\frac{1}{\varepsilon }\int_{0}^{t}\exp \{-\frac{\nu (v)(t-s)}{\varepsilon }%
\}ds\times \int_{s-\varepsilon \kappa }^{s}\frac{1}{\varepsilon
}ds_1
\\
&\leq \kappa C_N\sup_{0\leq s\leq t}\{\|h^{\varepsilon
}(s)\|_{\infty }\}. \label{inflowstep1}
\end{split}
\end{equation}

\textbf{CASE 3b:}  $|v|\leq N,$ $|v^{\prime }|\leq 2N,|v^{\prime
\prime }|\leq 3N,$ and $s-s_1\geq \varepsilon .$
We now can bound the third term in (%
\ref{double}) by
\begin{equation*}
\begin{split}
C\int_{0}^{t}\int_{B}\int_{0}^{s-\varepsilon \kappa }e^{-\frac{\nu (v)(t-s)}{%
\varepsilon }}e^{-\frac{\nu(v^{\prime })(s-s_{1})}{\varepsilon }%
}&|l_{M,w}(v,v^{\prime })l_{M,w}(v^{\prime },v^{\prime \prime
})\\
&h^{\varepsilon }(s_1,x_{1}-(s-s_1)v^{\prime },v^{\prime \prime })|
ds_{1}dv^{\prime }dv^{\prime \prime }ds
\end{split}
\end{equation*}%
where $B=\{|v^{\prime }|\leq 2N,$ $|v^{\prime \prime }|\leq 3N\}.$ By (\ref%
{k}), $l_{w}(v,v^{\prime })$ has possible integrable singularity of $\frac{%
1}{|v-v^{\prime }|},$ we can choose $l_{N}(v,v^{\prime })$ smooth
with compact support such that
\begin{equation}
\sup_{|p|\leq 3N}\int_{|v^{\prime }|\leq 3N}|l_{N}(p,v^{\prime
})-l_{w}(p,v^{\prime })|dv^{\prime }\leq \frac{1}{N}.
\label{approximate}
\end{equation}%
Splitting
\begin{equation*}
\begin{split}
l_{w}(v,v^{\prime })l_{w}(v^{\prime },v^{\prime \prime })
&=\{l_{w}(v,v^{\prime })-l_{N}(v,v^{\prime })\}l_{w}(v^{\prime
},v^{\prime \prime }) \\
&+\{l_{w}(v^{\prime },v^{\prime \prime })-l_{N}(v^{\prime
},v^{\prime \prime })\}l_{N}(v,v^{\prime })+l_{N}(v,v^{\prime
})l_{N}(v^{\prime },v^{\prime \prime }),
\end{split}
\end{equation*}%
we can use such an approximation (\ref{approximate}) to bound the above $%
s_{1},s$ integration by
\begin{equation}
\begin{split}\label{inflowstep41}
\frac{C}{N}\sup_{0\leq s\leq t}\{\|h^{\varepsilon }(s)\|_{\infty
}\}\times \{ \sup_{|v^{\prime }|\leq 2N}\int |l_{w}(v^{\prime
},v^{\prime \prime })|dv^{\prime \prime }+\sup_{|v|\leq 2N}\int
|l_{N}(v,v^{\prime
})|dv^{\prime } \}   \\
+C\int_{0}^{t}\int_{B}\int_{0}^{s-\varepsilon \kappa }e^{-\frac{\nu
(v)(t-s)}{\varepsilon }}e^{-\frac{\nu(v^{\prime
})(s-s_{1})}{\varepsilon}}
|l_{N}(v,v^{\prime })l_{N}(v^{\prime },v^{\prime \prime})\\
h^{\varepsilon }(s_1,x_{1}-(s-s_1)v^{\prime },v^{\prime \prime })|
ds_{1}dv^{\prime }dv^{\prime \prime }ds.
\end{split}
\end{equation}%
Since $l_{N}(v,v^{\prime })l_{N}(v^{\prime },v^{\prime \prime })$ is
bounded, we first integrate over $v^{\prime }$ to get
\begin{equation*}
\begin{split}
&C_{N}\int_{|v^{\prime }|\leq 2N}|h^{\varepsilon
}(s_1,x_{1}-(s-s_1)v^{\prime },v^{\prime \prime })|dv^{\prime } \\
&\leq C_{N}\left\{ \int_{|v^{\prime }|\leq 2N}\mathbf{1}_{\Omega
}(x_{1}-(s-s_1)v^{\prime })|h^{\varepsilon
}(s_1,x_{1}-(s-s_1)v^{\prime
},v^{\prime \prime })|^{2}dv^{\prime }\right\} ^{1/2} \\
&\leq \frac{C_{N}}{\kappa ^{3/2}\varepsilon ^{3/2}}\left\{
\int_{|y-x_{1}|\leq (s-s_1)3N}|h^{\varepsilon }(s_1,y,v^{\prime
\prime
})|^{2}dy\right\} ^{1/2} \\
&\leq \frac{C_{N}\{(s-s_1)^{3/2}+1\}}{\kappa ^{3/2}\varepsilon ^{3/2}}%
\left\{ \int_{\mathbf{\Omega }}|h^{\varepsilon }(s_1,y,v^{\prime
\prime })|^{2}dy\right\} ^{1/2}.
\end{split}
\end{equation*}%
Here we have made a change of variable $y=x_{1}-(s-s_1)v^{\prime },$
and for
$s-s_1\geq \kappa \varepsilon ,$ $|\frac{dy}{dv^{\prime }}|\geq \frac{1}{%
\kappa ^{3}\varepsilon ^{3}}.$ In the case of $\Omega
=\mathbf{R}^{3},$ the factor $\{(s-s_1)^{3/2}+1\}$ is not needed. By
(\ref{h}) and (\ref{f}), we
then further control the last term in (\ref{inflowstep41}) by:%
\begin{equation*}
\begin{split}
&\frac{C_{N,\kappa }}{\varepsilon
^{7/2}}\int_{0}^{t}\int_{0}^{s-\kappa \varepsilon }e^{-\frac{\nu
(v)(t-s)}{\varepsilon }}e^{-\frac{\nu(v^{\prime
})(s-s_{1})}{\varepsilon }}\{(s-s_1)^{3/2}+1\}\\
&\quad\quad\quad\quad\quad\int_{|v^{\prime \prime }|\leq 3N}\left\{
\int_{\mathbf{\Omega }}|h^{\varepsilon }(s_1,y,v^{\prime \prime
})|^{2}dy\right\} ^{1/2}dv^{\prime \prime }ds_1ds   \\
&\leq \frac{C_{N,\kappa }}{\varepsilon ^{7/2}}\int_{0}^{t}\int_{0}^{s-%
\kappa \varepsilon }e^{-\frac{\nu (v)(t-s)}{\varepsilon }}e^{-\frac{%
\nu(v^{\prime })(s-s_{1})}{\varepsilon
}}\{(s-s_1)^{3/2}+1\}\\
&\quad\quad\quad\quad\quad\left\{ \int_{|v^{\prime \prime }|\leq
3N}\int_{\Omega }|f^{\varepsilon }(s_1,y,v^{\prime \prime
})|^{2}dydv^{\prime \prime }\right\} ^{1/2}ds_1ds
\\
&\leq \frac{C_{N,\kappa }}{\varepsilon ^{3/2}}\sup_{0\leq s\leq
t}\|f^{\varepsilon }(s)\|_{2}.
\end{split}
\end{equation*}

In summary, we have established, for any $\kappa >0$ and large $N>0,$%
\begin{equation*}
\begin{split}
\sup_{0\leq s\leq t}\{\varepsilon ^{3/2}\|h^{\varepsilon
}(s)\|_{\infty }\} \leq \{Cm^{\gamma+3}+C_{N,m}\kappa
+\frac{C_{m}}{N}\}\sup_{0\leq s\leq t}\{\varepsilon
^{3/2}\|h^{\varepsilon }(s)\|_{\infty }\} +\varepsilon
^{7/2}C\\+C_{\varepsilon ,N}\|\varepsilon ^{3/2}h_{0}\|_{\infty }
+\sqrt{\varepsilon }C\sup_{0\leq s\leq t}\{\varepsilon
^{3/2}\|h^{\varepsilon }(s)\|_{\infty }\}^{2}+C_{m,N,\kappa
}\sup_{0\leq s\leq t}\|f^{\varepsilon }(s)\|_{2}.
\end{split}
\end{equation*}%
For sufficiently small $\varepsilon>0$,
first choosing $m$ small,  then $N$ sufficiently large, and finally letting $%
\kappa $ small  so that $\{Cm^{\gamma +3}+C_{N,m}\kappa +\frac{C_{m}}{N}\}<%
\frac{1}{2}$, we get
\begin{equation*}
\sup_{0\leq s\leq \tau }\{\varepsilon ^{3/2}\|h^{\varepsilon
}(s)\|_{\infty }\}\leq C\{\|\varepsilon ^{3/2}h_{0}\|_{\infty
}+\sup_{0\leq s\leq \tau }\|f^{\varepsilon }(s)\|_{2}+\varepsilon
^{7/2}\}
\end{equation*}%
and we conclude our proof.
\end{proof}

\section{Acoustic Limit}

\label{3}

\subsection{Compressible Euler and Acoustic systems}

We note that the acoustic system \eqref{acoustic-system} is essentially
linear wave equations. Thus the well-posedness follows from the linear
theory of wave equations: for the given initial data $(\sigma^0,\,u^0,\,%
\theta^0)\in H^s$, there exist global-in-time classical solutions $%
(\sigma,\,u,\,\theta) \in C([0, \infty)\,; H^s)$ to the acoustic system %
\eqref{acoustic-system}. In particular, we have the following energy
estimates: for each $s\geq 0$,
\begin{equation}  \label{acoustic}
\|(\sigma,\,u\,,\sqrt{\tfrac{3}{2}}\theta)(t)\|^2_{H^s} =\|(\sigma^0,\,u^0,\,%
\sqrt{\tfrac{3}{2}}\theta^0)\|^2_{ H^s}, \text{ for all }t\geq 0\,.
\end{equation}

On the other hand, classical solutions to compressible Euler equations exist
for only finite time \cite{Sid}. Since the properties of solutions play an
important role in our argument, we present the existence result of smooth
solutions to compressible Euler system. Normalizing $R\equiv 1$ in the
equation of state \eqref{ES}, we can write the invisid flow equations in
variables $\rho,\,\mathfrak{u},\,T$ as follows:
\begin{equation}  \label{Euler1}
\begin{split}
\partial_t\rho +(\mathfrak{u}\!\cdot\!\nabla)\rho+\rho\nabla\!\cdot\!
\mathfrak{u}=0\,, \\
\rho\partial_t \mathfrak{u}+\rho (\mathfrak{u}\!\cdot\!\nabla)\mathfrak{u}
+\rho\nabla T+T\nabla\rho=0\,, \\
\partial_t T+(\mathfrak{u}\!\cdot\!\nabla)T+\tfrac23 T\nabla\!\cdot\!
\mathfrak{u}=0\,.
\end{split}%
\end{equation}
It is a classical result from the theory of symmetric hyperbolic system that
the lifespan of smooth solutions to 3D compressible Euler equations with
smooth initial data, which are a small perturbation of amplitude $\delta$
from a constant state, is at least $O(\delta^{-1})$. We summarize the result
in the following lemma:

\begin{lemma}
\label{Life-Span} Consider the compressible Euler system \eqref{Euler1} with
initial data:
\begin{equation}  \label{pert-initial}
\rho^0=1+\delta \sigma^0 \,,\quad\! \mathfrak{u}^0=\delta u^0\,,\quad\!
T^0=1+\delta \theta^0\,,
\end{equation}
for any given $(\sigma^0, u^0, \theta^0) \in H^s$ with $s>\frac52 $. Choose $%
\delta_1>0$ so that for any $0<\delta\leq \delta_1$, the positivity of $%
\rho^0$ and $T^0$ is guaranteed. Then, for each $0<\delta\leq \delta_1$,
there is a family of classical solutions $(\rho^\delta, \mathfrak{u}^\delta,
T^\delta) \in C([0, \tau^\delta]\,; H^s)\cap C^1([0, \tau^\delta]\,;
H^{s-1}) $ of the Euler equations \eqref{Euler1} such that $\rho^\delta>0$, $%
T^\delta>0$, and the following estimates hold:
\begin{equation}  \label{pert-euler}
\|(\rho^\delta, \mathfrak{u}^\delta, T^\delta)-(1,0,1)\|_{C([0,
\tau^\delta]; H^s)\cap C^1([0, \tau^\delta]; H^{s-1})} \leq C_0\,.
\end{equation}
Furthermore, the lifespans $\tau^\delta$ have the following lower bound
\begin{equation*}
\tau^\delta > \frac{C_1}{\delta}\,.
\end{equation*}
Here the constants $C_0\,, C_1$ are independent of $\delta$, depend only on
the $H^s\mbox{-}$norm of $(\sigma^0, u^0, \theta^0)$.
\end{lemma}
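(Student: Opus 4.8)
The plan is to regard \eqref{Euler1} as a quasilinear Friedrichs-symmetrizable hyperbolic system, quote the classical local existence theory, and then extend the life-span to $O(\delta^{-1})$ by an energy estimate that exploits the quadratic structure of the nonlinearity near the constant state $(1,0,1)$.

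First I would exhibit the symmetrizer. Multiplying the density equation in \eqref{Euler1} by $T/\rho$, leaving the momentum equation unchanged, and multiplying the temperature equation by $\tfrac{3\rho}{2T}$, the system takes the form $A_0(U)\partial_t U+\sum_{j=1}^3 A_j(U)\partial_j U=0$ with $U=(\rho,\mathfrak{u},T)$, where $A_0=\mathrm{diag}\big(T/\rho,\rho,\rho,\rho,\tfrac{3\rho}{2T}\big)$ is symmetric and positive definite whenever $\rho,T>0$, and the $A_j$ are symmetric. Since $H^s\hookrightarrow L^\infty$ for $s>\tfrac32$, choosing $\delta_1$ so small that $\delta_1\|(\sigma^0,u^0,\theta^0)\|_{L^\infty}<\tfrac12$ guarantees $\rho^0,T^0\in(\tfrac12,\tfrac32)$; the Cauchy problem with data \eqref{pert-initial} then falls within the classical local existence theory for quasilinear symmetric hyperbolic systems, producing a maximal time $\tau^\delta>0$ and a unique solution $(\rho^\delta,\mathfrak{u}^\delta,T^\delta)\in C([0,\tau^\delta);H^s)\cap C^1([0,\tau^\delta);H^{s-1})$ together with the usual continuation criterion: if $\tau^\delta<\infty$ then either $\|(\rho^\delta,\mathfrak{u}^\delta,T^\delta)(t)-(1,0,1)\|_{H^s}\to\infty$ or $\rho^\delta,T^\delta$ fail to stay bounded away from $0$ as $t\uparrow\tau^\delta$.

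Next I would work with the deviation $V:=(\rho^\delta,\mathfrak{u}^\delta,T^\delta)-(1,0,1)$, which satisfies a system of exactly the same form and has $\|V(0)\|_{H^s}=\delta\|(\sigma^0,u^0,\theta^0)\|_{H^s}=:\delta M$. The standard $H^s$ energy estimate --- apply $\partial^\alpha$ for $|\alpha|\le s$, pair with $A_0\partial^\alpha V$, integrate by parts using the symmetry of $A_j$, and bound the commutators by Kato--Ponce/Moser inequalities --- yields, on any subinterval where $\|V\|_{H^s}\le1$ (so that $A_0\ge c_0 I$ and all coefficients together with their derivatives are bounded),
\[
\frac{d}{dt}\,\mathcal E(t)\le C\,\mathcal E(t)^{3/2},\qquad
\mathcal E(t):=\sum_{|\alpha|\le s}\langle A_0\,\partial^\alpha V,\partial^\alpha V\rangle\sim\|V(t)\|_{H^s}^2 .
\]
The reason the right-hand side is cubic in $\|V\|_{H^s}$ rather than merely quadratic is precisely that $(1,0,1)$ is an exact constant solution: each term in the energy identity --- the terms $\int(\partial_t A_0)|\partial^\alpha V|^2$ and $\int(\nabla_x\!\cdot\! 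A_j)|\partial^\alpha V|^2$, and the commutator terms --- involves at least one derivative of a coefficient, hence a factor of $\nabla_x V$ (using $\partial_t V=-A_0^{-1}\sum_j A_j\partial_j V$ to trade the time derivative for a space derivative), and is thus controlled by $\|V\|_{W^{1,\infty}}\|V\|_{H^s}^2\lesssim\|V\|_{H^s}^3$ since $H^{s-1}\hookrightarrow L^\infty$ for $s>\tfrac52$.

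Finally, integrating the differential inequality gives $\mathcal E(t)\le\mathcal E(0)\big(1-\tfrac{C}{2}\sqrt{\mathcal E(0)}\,t\big)^{-2}\le 4\,\mathcal E(0)$ for $0\le t\le\big(C\sqrt{\mathcal E(0)}\big)^{-1}$. Since $\sqrt{\mathcal E(0)}\le C'\delta M$, this means $\|V(t)\|_{H^s}\le C_0$ for $0\le t\le C_1/\delta$ with $C_0,C_1$ depending only on $M$; after possibly shrinking $\delta_1$ so that $C_0<1$, the a priori bound keeps the solution inside the region where the estimate was derived and where $\rho^\delta,T^\delta$ stay bounded away from $0$. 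A routine continuity (bootstrap) argument then promotes this to an honest bound, and the continuation criterion forces $\tau^\delta>C_1/\delta$, which yields \eqref{pert-euler} (the $C^1([0,\tau^\delta];H^{s-1})$ part being read off from $\partial_tV=-A_0^{-1}\sum_jA_j\partial_jV$) and the stated lower bound on the life-span. The only step requiring genuine work is establishing the cubic energy inequality with $C$ uniform over the relevant range of $U$; the symmetrization, the ODE comparison, and the local existence/continuation input are all classical.
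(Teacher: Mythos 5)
The paper omits the proof of this lemma, simply citing the classical references \cite{Fried,K,M}; your argument is exactly the standard one found there (and the one the paper implicitly relies on). Your symmetrizer $A_0=\mathrm{diag}\bigl(T/\rho,\rho\,\mathbb{I},\tfrac{3\rho}{2T}\bigr)$ is in fact the same one the paper later uses explicitly in the proof of Lemma~\ref{keylemma}, and the key point --- that perturbing off an exact constant solution makes the $H^s$ energy inequality cubic, $\tfrac{d}{dt}\mathcal E\lesssim\mathcal E^{3/2}$, whence $\mathcal E(0)\sim\delta^2$ gives a lifespan $\gtrsim\delta^{-1}$ by the ODE comparison plus continuity/continuation --- is correctly identified and carried out.
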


We omit the proof of Lemma \ref{Life-Span} (see \cite{Fried,K,M}).

Now, for any given $\tau >0$ and given acoustic initial data $(\sigma^0,
u^0, \theta^0) \in H^s$, we define
\begin{equation}  \label{delta-1}
\delta_1=\frac{C_1}{\tau}\,.
\end{equation}
Thus the lifespan of the solutions $(\rho^\delta, \mathfrak{u}^\delta,
T^\delta)$ of the compressible Euler equations constructed in Lemma \ref%
{Life-Span} have a uniform lower bound
\begin{equation*}
\tau^\delta > \frac{C_1}{\delta} > \frac{C_1}{\delta_1} = \tau\,.
\end{equation*}
From now on, we consider the solutions of the compressible Euler system on
an arbitrary finite time interval $[0\,, \tau]$ and we fix $\delta_1>0$ as
in \eqref{delta-1}.

Next, we derive a refined estimate of two solutions to compressible Euler
and acoustic systems. In order to do so, we first introduce the following
difference variables $(\sigma^\delta_d,u^\delta_d, \theta^\delta_d)$ that
are given by the second order perturbation in $\delta$ of Euler solutions:
\begin{equation}  \label{expansion}
\delta^2\sigma^\delta_d\equiv \rho^\delta-1-\delta\sigma\,,\quad \delta^2
u^\delta_d\equiv \mathfrak{u}^\delta-\delta u\,,\quad
\delta^2\theta^\delta_d\equiv T^\delta-1-\delta\theta\,.
\end{equation}

\begin{lemma}
\label{keylemma} Let $\tau >0.$ Let $(\sigma ^{0},\,u^{0},\,\theta ^{0})\in
H^{s}$ in \eqref{acoustic-system} with corresponding  acoustic solution $%
(\sigma ,\,u,\,\theta )$. Let $(\rho ^{\delta },\mathfrak{u}^{\delta
},T^{\delta })$ be the Euler solutions of \eqref{Euler1} with the
corresponding initial data \eqref{pert-initial} constructed in Lemma \ref%
{Life-Span}. Then for all $0<\delta \leq \delta _{0}$ and for $s\geq 3$,
there exists a constant $C_{2}>0$ only depending on $\tau $ and $H^{s+1}%
\mbox{-}$norm of $(\sigma ^{0},u^{0},\theta ^{0})$ such that
\begin{equation}
\Vert (\sigma _{d}^{\delta }\,,u_{d}^{\delta }\,,\theta _{d}^{\delta })\Vert
_{H^{s}}\leq C_{2}.  \label{delta^2}
\end{equation}
\end{lemma}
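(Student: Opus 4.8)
The plan is to exploit that the acoustic system \eqref{acoustic-system} is exactly the linearization of the compressible Euler system \eqref{Euler1} about the constant state $(1,0,1)$. After rescaling the Euler solution by its $O(\delta)$ amplitude, its leading part coincides with the acoustic solution, and by \eqref{expansion} the remainder, divided by one more power of $\delta$, is precisely $(\sigma^\delta_d,u^\delta_d,\theta^\delta_d)$; this remainder solves the acoustic system once more, now forced by the quadratic nonlinearity and emanating from zero initial data. A constant-coefficient hyperbolic energy estimate on $[0,\tau]$ then yields \eqref{delta^2}.

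First I would set $a^\delta=(\rho^\delta-1)/\delta$, $b^\delta=\mathfrak u^\delta/\delta$, $c^\delta=(T^\delta-1)/\delta$ and apply Lemma \ref{Life-Span} \emph{at regularity $s+1$} rather than $s$ --- this is the origin of the $H^{s+1}$-dependence of $C_2$ in the statement. For $0<\delta\le\delta_1$ this gives $(a^\delta,b^\delta,c^\delta)$ bounded in $C([0,\tau];H^{s+1})$ and $\partial_t(a^\delta,b^\delta,c^\delta)$ bounded in $C([0,\tau];H^{s})$, uniformly in $\delta$. Substituting into \eqref{Euler1} and dividing by $\delta$ recasts the rescaled Euler system as
\[
\partial_t a^\delta+\nabla\!\cdot\! b^\delta=-\delta G_1^\delta,\quad \partial_t b^\delta+\nabla(a^\delta+c^\delta)=-\delta G_2^\delta,\quad \partial_t c^\delta+\tfrac23\nabla\!\cdot\! b^\delta=-\delta G_3^\delta,
\]
where the $G_i^\delta$ collect the quadratic terms (plus a harmless $O(\delta)$ remainder in $G_2^\delta$), e.g.\ $G_1^\delta=(b^\delta\!\cdot\!\nabla)a^\delta+a^\delta\,\nabla\!\cdot\! b^\delta$. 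By the algebra property of $H^s$ (valid since $s\ge3>3/2$) together with the $H^{s+1}$ bounds on $(a^\delta,b^\delta,c^\delta)$ and the $H^s$ bound on $\partial_t b^\delta$, one gets $\|(G_1^\delta,G_2^\delta,G_3^\delta)\|_{C([0,\tau];H^s)}\le M$ with $M$ depending only on $\tau$ and $\|(\sigma^0,u^0,\theta^0)\|_{H^{s+1}}$, uniformly in $\delta\in(0,\delta_1]$.

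Next, by \eqref{expansion}, $\sigma^\delta_d=(a^\delta-\sigma)/\delta$ and likewise for $u^\delta_d$, $\theta^\delta_d$. Subtracting the acoustic system \eqref{acoustic-system} (whose third equation reads $\partial_t\theta+\tfrac23\nabla\!\cdot\! u=0$) from the displayed rescaled Euler system and dividing by $\delta$, the $O(1)$ linear terms cancel identically --- this cancellation is exactly the assertion that \eqref{acoustic-system} is the linearization of \eqref{Euler1} --- leaving the acoustic system for $(\sigma^\delta_d,u^\delta_d,\theta^\delta_d)$ with right-hand side $-(G_1^\delta,G_2^\delta,G_3^\delta)$. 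Moreover $a^\delta(0)=\sigma^0=\sigma(0)$, etc., by \eqref{pert-initial} and \eqref{initial-data}, so $(\sigma^\delta_d,u^\delta_d,\theta^\delta_d)|_{t=0}=0$. Since this system has constant coefficients and is symmetrized by the weight $(1,1,\tfrac32)$ --- so that the transport terms cancel after integration by parts, exactly as in \eqref{acoustic} --- and since $\partial^\alpha$ commutes with it for every $|\alpha|\le s$, the standard $H^s$ energy identity gives $\tfrac{d}{dt}\|(\sigma^\delta_d,u^\delta_d,\theta^\delta_d)\|_{H^s}\le C\|(G_1^\delta,G_2^\delta,G_3^\delta)\|_{H^s}\le CM$. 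Integrating from the zero initial data yields $\|(\sigma^\delta_d,u^\delta_d,\theta^\delta_d)(t)\|_{H^s}\le CM\tau=:C_2$ for $t\in[0,\tau]$, and one takes $\delta_0=\delta_1$.

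The main obstacle is the one-derivative loss in the forcing: each $G_i^\delta$ involves a first derivative of the Euler solution, so controlling it in $H^s$ forces one to run the Euler existence theory (Lemma \ref{Life-Span}) at regularity $s+1$, which is precisely why $C_2$ depends on the $H^{s+1}$-norm rather than the $H^s$-norm of the data. The remaining ingredients --- the exact cancellation of the linear part and the energy estimate for a constant-coefficient symmetric hyperbolic system with $H^s$ forcing --- are routine.
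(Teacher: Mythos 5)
Your argument is correct in outline but takes a genuinely different decomposition from the paper. You regard the rescaled Euler variables $(a^\delta,b^\delta,c^\delta)=\delta^{-1}(\rho^\delta-1,\mathfrak{u}^\delta,T^\delta-1)$ as an exact solution of the \emph{constant-coefficient} acoustic operator with a known quadratic forcing $-\delta G^\delta$; subtracting the acoustic solution and dividing by $\delta$ gives the constant-coefficient acoustic system for $U_d=(\sigma^\delta_d,u^\delta_d,\theta^\delta_d)$ with zero initial data and forcing $-G^\delta$, after which the $(1,1,\tfrac32)$-weighted $H^s$ energy identity gives a linear-in-$\tau$ bound with no commutators. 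The paper instead substitutes the ansatz \eqref{perturbation} directly into \eqref{Euler1} and keeps the Euler fields as \emph{variable} coefficients on the left, producing the linear system \eqref{key} symmetrized by $A_0$ as in \eqref{symmetric}, whose forcing $F$ is built only from the acoustic solution; a Gronwall argument then closes. Your route is tidier (no variable symmetrizer, $\partial^\alpha$ commutes exactly), but it silently uses a \emph{sharper} input than the paper does: to get $\|G^\delta\|_{H^s}$ uniformly bounded you need $\|(a^\delta,b^\delta,c^\delta)\|_{C([0,\tau];H^{s+1})}\le C$ independent of $\delta$, i.e.\ the Euler perturbation from $(1,0,1)$ stays $O(\delta)$ in $H^{s+1}$, whereas \eqref{pert-euler} as written only yields $O(1)$ and hence $\|(a^\delta,b^\delta,c^\delta)\|_{H^{s+1}}\lesssim 1/\delta$. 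That $O(\delta)$ refinement is indeed what the standard symmetric-hyperbolic lifespan proof behind Lemma \ref{Life-Span} delivers --- the quantity propagated by Gronwall over the $O(1/\delta)$ time scale is precisely $\|(a^\delta,b^\delta,c^\delta)\|_{H^{s+1}}$ --- so your argument goes through, but you should invoke that refined bound explicitly rather than read it off \eqref{pert-euler}. The paper's variable-coefficient route needs only \eqref{pert-euler} verbatim, because there the Euler fields enter only as coefficients (bounded by $C_0$) and the inhomogeneity in \eqref{key} involves the acoustic solution alone, controlled by \eqref{acoustic}.
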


Lemma \ref{keylemma} verifies that the acoustic system is the linearization
about the constant state of the compressible Euler system:
\begin{equation}
\sup_{0\leq t\leq \tau }\Vert (\rho ^{\delta }-1-\delta \sigma ,\;\mathfrak{u%
}^{\delta }-\delta u,\;T^{\delta }-1-\delta \theta )\Vert _{H^{s}}\leq
C_{2}\delta ^{2}.  \label{refine-estimate}
\end{equation}%
In addition, by the estimate \eqref{delta^2} for $s\geq 3$ and Sobolev
embedding theorem, we obtain the uniform point-wise estimates of the
difference variables $(\sigma _{d}^{\delta },u_{d}^{\delta },\theta
_{d}^{\delta })$.

\begin{proof}\textit{of Lemma \ref{keylemma}:}
Rewrite \eqref{expansion} as
\begin{equation}\label{perturbation}
\rho^\delta=1+\delta\sigma+\delta^2\sigma^\delta_d\,,\quad
\mathfrak{u}^\delta=\delta u+\delta^2 u^\delta_d\,,\quad
 T^\delta=1+\delta\theta+\delta^2\theta^\delta_d\,,
\end{equation}
and plug into \eqref{Euler1} to get:
\[
\begin{split}
\partial_t[\delta\sigma+\delta^2\sigma^\delta_d]+(\delta
u+\delta^2 u^\delta_d) \cdot\nabla(\delta\sigma+
\delta^2\sigma^\delta_d) + (1+\delta\sigma+
\delta^2\sigma^\delta_d)\nabla\cdot(\delta
u+\delta^2 u^\delta_d)&=0\\
(1+\delta\sigma+\delta^2\sigma^\delta_d)\partial_t[\delta u+\delta^2
u^\delta_d]+ (1+\delta\sigma+\delta^2\sigma^\delta_d)[(\delta
u+\delta^2 u^\delta_d)\cdot\nabla] (\delta u+\delta^2 u^\delta_d)
\quad&\\
+ (1+\delta\sigma+\delta^2\sigma^\delta_d)\nabla
(\delta\theta+\delta^2\theta^\delta_d)
+(1+\delta\theta+\delta^2\theta^\delta_d)\nabla
(\delta\sigma+\delta^2\sigma^\delta_d)&=0\\
\partial_t[\delta\theta+\delta^2\theta^\delta_d]+ (\delta
u+\delta^2 u^\delta_d)
\cdot\nabla(\delta\theta+\delta^2\theta^\delta_d)+
\tfrac23(1+\delta\theta+\delta^2\theta^\delta_d)\nabla\cdot(\delta
u+\delta^2 u^\delta_d)&=0
\end{split}
\]
Coefficients of $\delta$ in each equation form the acoustic system
\eqref{acoustic-system} in $(\sigma, u,\theta)$, which is indeed the
acoustic solution by the assumption. Hence, the remaining terms are
at least of order $O(\delta^2)$. For instance, the continuity
equation reduces to
\[
\delta^2[\partial_t\sigma^\delta_d+(u+\delta
u^\delta_d)\cdot\nabla\sigma+\underbrace{(\delta u+\delta^2
u^\delta_d)}_{(a)}\cdot\nabla\sigma^\delta_d +
(\sigma+\delta\sigma^\delta_d)\nabla\cdot u+
\underbrace{(1+\delta\sigma+
\delta^2\sigma^\delta_d)}_{(b)}\nabla\cdot u_d^\delta]=0\,.
\]
Use \eqref{perturbation} to replace $(a)$ and $(b)$ by
$\mathfrak{u}^\delta$ and $T^\delta$ respectively. Then the equation
can be written as
\[
\partial_t\sigma^\delta_d+(\mathfrak{u}^\delta\cdot\nabla)\sigma^\delta_d
+\rho^\delta\nabla\cdot u^\delta_d + \delta[\nabla\sigma\cdot
u^\delta_d+(\nabla\cdot u)\sigma^\delta_d]+\nabla\cdot(\sigma
u)=0\,.
\]
Similarly, one can deduce that $(\sigma^\delta_d\,,u^\delta_d\,,
\theta^\delta_d)$ satisfies the following linear system of
equations:
\begin{equation}\label{key}
\begin{split}
&\partial_t\sigma^\delta_d+(\mathfrak{u}^\delta\cdot\nabla)\sigma^\delta_d
+\rho^\delta\nabla\cdot u^\delta_d + \delta[\nabla\sigma\cdot
u^\delta_d+(\nabla\cdot
u)\sigma^\delta_d]=-\nabla\cdot(\sigma u)\\
&\rho^\delta \partial_t
u^\delta_d+\rho^\delta(\mathfrak{u}^\delta\cdot\nabla)u^\delta_d
+\rho^\delta\nabla\theta^\delta_d+T^\delta\nabla\sigma^\delta_d
+\delta [(\partial_tu) \sigma^\delta_d+\rho^\delta(u^\delta_d\cdot
\nabla)u+\theta^\delta_d\nabla\sigma+\sigma^\delta_d\nabla\theta]\\
&\;\;=-\sigma\partial_tu-\rho^\delta(u\cdot\nabla)u-\nabla(\sigma\theta)\\
&\partial_t\theta^\delta_d+(\mathfrak{u}^\delta\cdot\nabla)\theta^\delta_d
+\tfrac23 T^\delta\nabla\cdot u^\delta_d +\delta[\nabla\theta\cdot
u^\delta_d+\tfrac23(\nabla\cdot
u)\theta^\delta_d]=-u\cdot\nabla\theta-\tfrac23\theta\nabla\cdot u
\end{split}
\end{equation}
The point here is that the above system is linear in
$\sigma^\delta_d\,,u^\delta_d\,, \theta^\delta_d$, although the
coefficients may depend on $\rho^\delta,\,
\mathfrak{u}^\delta,\,T^\delta$ of compressible Euler system and
$\sigma,\,u,\,\theta$ of acoustic system. However, we already know
that these coefficients are smooth at least up to time $\tau$  and
they have Sobolev energy bounds \eqref{acoustic} and
\eqref{pert-euler}. Note that the system \eqref{key} can be written
as a symmetric system with the corresponding symmetrizer $A_0$:
\begin{equation}\label{symmetric}
A_0\partial_t U_d +\sum_{i=1}^3 A_i\partial_i U_d +B U_d = F
\end{equation}
where $U_d$, $A_0$, and $A_i$ are given as follows:
\[
U_d\equiv\left(\begin{array}{c} \sigma^\delta_d \\ (u^\delta_d)^t\\
\theta^\delta_d
\end{array}
\right),\; A_0 \equiv\left(\begin{array}{ccc}
\tfrac{T^\delta}{\rho^\delta}
& 0 & 0   \\ 0 & \rho^\delta \mathbb{I} & 0\\
  0 & 0 & \tfrac{3\rho^\delta}{2T^\delta}
\end{array}
\right), \; A_i \equiv\left(\begin{array}{ccc}
\tfrac{T^\delta}{\rho^\delta}(\mathfrak{u}^\delta)^i
& T^\delta e_i & 0 \\
T^\delta (e_i)^t  & \rho^\delta(\mathfrak{u}^\delta)^i \mathbb{I}&
\rho^\delta (e_i)^t\\
0 & \rho^\delta e_i &
\tfrac{3\rho^\delta}{2T^\delta}(\mathfrak{u}^\delta)^i
\end{array}
\right).
\]
$(\cdot)^t$ denotes the transpose of row vectors, $e_i$'s for
$i=1,2,3$ are the standard unit (row) base vectors in
$\mathbf{R}^3$, and $\mathbb{I}$ is the $3\times 3$ identity matrix.
$B$ and $F$, which consist of $\rho^\delta,\,
\mathfrak{u}^\delta,\,T^\delta$, $\sigma,\,u,\,\theta$ and first
derivatives of $\sigma,\,u,\,\theta$,  can be easily written down.
Note that since $\rho^\delta$ and $T^\delta$ have positive lower and
upper bounds for $t\leq \tau$, \eqref{symmetric} is strictly
hyperbolic and thus we can apply the standard energy method of the
linear symmetric hyperbolic system to \eqref{symmetric} to obtain
the following energy inequality:
\begin{equation}
\frac{d}{dt}\|U_d\|^2_{H^s}\leq C_3\|U_d \|^2_{H^s}+ C_4
\|U_d\|_{H^s}.
\end{equation}
 Here $C_3, C_4$ are constants depending on
$\|(\rho^\delta,\,\mathfrak{u}^\delta,\, T^\delta)\|_{H^{s+1}}$ and
$\|(\sigma\,,u\,,\theta)\|_{H^{s+1}}$. The second term in the right
hand side comes from the forcing term $F$. By Gronwall inequality,
we conclude that $\|(\sigma^\delta_d\,,u^\delta_d\,,
\theta^\delta_d)\|_{H^s}$ is bounded by a constant depending on
$\tau$ and $H^{s+1}$-norm of initial data
$(\sigma^0,\,u^0,\,\theta^0)$ and this completes the proof of Lemma
\ref{keylemma}.
\end{proof}


\subsection{Local Maxwellians $\protect\mu^\protect\delta$ and Proof of
Theorem \protect\ref{main-theorem}}

Now, from the refined estimate \eqref{refine-estimate}, we can choose $%
\delta_2$ sufficiently small so that for each $0<\delta\leq \delta_2$, $%
T^\delta$ satisfies the following moderate temperature variation condition
\begin{equation}  \label{T}
T_M^\delta <T^\delta(t,x) <2T_M^\delta
\end{equation}
for some constant $T_M^\delta>0$. We define
\begin{equation}  \label{delta0}
\delta_0\equiv \min\{\delta_1, \delta_2\}\,.
\end{equation}

We denote the local Mawellian, induced by compressible Euler solutions $\rho
^{\delta },\,\mathfrak{u}^{\delta },\,T^{\delta }$ with the initial data %
\eqref{pert-initial} as obtained in Lemma \ref{Life-Span}, by
\begin{equation}
\mu ^{\delta }\equiv \mu ^{\delta }(t,x,v)=\frac{\rho ^{\delta }(t,x)}{[2\pi
T^{\delta }(t,x)]^{3/2}}\exp \left\{ -\frac{[v-\mathfrak{u}^{\delta
}(t,x)]^{2}}{2T^{\delta }(t,x)}\right\} \,.  \label{LM}
\end{equation}%
For each $\delta <\delta _{0}$, we take the Hilbert expansion of the
Boltzmann equation \eqref{boltzmann} around the local Maxwellian $\mu
^{\delta }$ of the form
\begin{equation*}
F^{\varepsilon }=\sum_{n=0}^{6}\varepsilon ^{n}F_{n}+\varepsilon
^{3}F_{R}^{\varepsilon },
\end{equation*}%
where $F_{0},...,F_{6}$ are the first 6 terms of the Hilbert expansion. Here
we have set $F_{0}=\mu ^{\delta }$. Since $(\rho ^{\delta },\,\mathfrak{u}%
^{\delta },\,T^{\delta })$ is a smooth solution to the compressible Euler
system satisfying the condition \eqref{T}, from Theorem \ref{Main-Theorem}
on the compressible Euler limit, it follows that for sufficiently small $%
\varepsilon \leq \varepsilon _{0}$,
\begin{equation}
\sup_{0\leq t\leq \tau }\Vert {F^{\varepsilon }(t)-\mu ^{\delta }(t)}\Vert
_{\infty }+\sup_{0\leq t\leq \tau }\Vert {F^{\varepsilon }(t)-\mu ^{\delta
}(t)}\Vert _{2}\leq C_{\tau }\varepsilon   \label{euler-limit}
\end{equation}%
where a constant $C_{\tau }$ depends on $\tau $, $\mu ^{\delta }$, $%
F_{1},..,F_{6}$.

We now show that $\mu^\delta$ stays close to $\mu^0+\delta G$ where $G$ is
the acoustic perturbation defined in \eqref{gg}.

\begin{lemma}
\label{lemma4.1} Consider smooth solutions $(\rho^\delta,\,\mathfrak{u}%
^\delta,\, T^\delta)$ and $(\sigma\,,u\,,\theta)$ so that $%
(\sigma^\delta_d\,, u^\delta_d\,,\theta^\delta_d)$ in \eqref{perturbation}
is smooth, for instance choose $s\geq 3$ in Lemma \ref{keylemma}. Let $%
M^\delta$ be given as in \eqref{LM} and $G$ as in \eqref{gg}. Then there
exists a small enough $\delta_0>0$ so that for each $0<\delta\leq\delta_0$,
there exists a constant $C_5$ depending on $\tau$ and the initial data $%
(\sigma^0,\,u^0,\,\theta^0)$ such that
\begin{equation}  \label{expand}
\sup_{0\leq t\leq\tau}\|\mu^\delta(t)-\mu^0-\delta G (t)\|_\infty
+\sup_{0\leq t\leq\tau}\|\mu^\delta(t)-\mu^0-\delta G(t) \|_2\leq C_5\,
\delta^2.
\end{equation}
\end{lemma}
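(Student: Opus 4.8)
The plan is to treat the local Maxwellian $\mu^{\delta}$ as a smooth function of its hydrodynamic parameters and Taylor-expand it about the constant state $(1,0,1)$, recognizing the first-order term as exactly $\delta G$. Introduce the $C^{\infty}$ map
\[
M(\rho,m,T;v)=\frac{\rho}{(2\pi T)^{3/2}}\exp\Big\{-\frac{|v-m|^{2}}{2T}\Big\},\qquad \rho>0,\ T>0,\ m\in\mathbf{R}^{3},
\]
so that $\mu^{\delta}(t,x,\cdot)=M(\rho^{\delta},\mathfrak{u}^{\delta},T^{\delta};\cdot)$ and $\mu^{0}=M(1,0,1;\cdot)$. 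A direct computation gives $\partial_{\rho}M|_{(1,0,1)}=\mu^{0}$, $\partial_{m_{i}}M|_{(1,0,1)}=v_{i}\mu^{0}$ and $\partial_{T}M|_{(1,0,1)}=\tfrac{|v|^{2}-3}{2}\mu^{0}$, hence, using the expansion \eqref{perturbation},
\[
DM|_{(1,0,1)}\!\cdot(\rho^{\delta}-1,\mathfrak{u}^{\delta},T^{\delta}-1)=\mu^{0}\Big[(\rho^{\delta}-1)+v\!\cdot\!\mathfrak{u}^{\delta}+\tfrac{|v|^{2}-3}{2}(T^{\delta}-1)\Big]=\delta G+\delta^{2}\mu^{0}\Big[\sigma^{\delta}_{d}+v\!\cdot\!u^{\delta}_{d}+\tfrac{|v|^{2}-3}{2}\theta^{\delta}_{d}\Big].
\]
Writing $\xi^{\delta}\equiv(\rho^{\delta}-1,\mathfrak{u}^{\delta},T^{\delta}-1)$ and using Taylor's theorem with integral remainder, I then obtain the exact identity
\[
\mu^{\delta}-\mu^{0}-\delta G=\delta^{2}\mu^{0}\Big[\sigma^{\delta}_{d}+v\!\cdot\!u^{\delta}_{d}+\tfrac{|v|^{2}-3}{2}\theta^{\delta}_{d}\Big]+\mathcal R,\qquad \mathcal R=\int_{0}^{1}(1-\lambda)\,\partial^{2}M\big((1,0,1)+\lambda\xi^{\delta}\big)[\xi^{\delta},\xi^{\delta}]\,d\lambda ,
\]
where $\partial^{2}M[\cdot,\cdot]$ is the Hessian quadratic form of $M$ in the parameters.

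It remains to show that both terms on the right are $O(\delta^{2})$ in the $L^{\infty}$ and $L^{2}$ norms over $(x,v)\in\Omega\times\mathbf{R}^{3}$, uniformly for $t\le\tau$. For the first term, Lemma \ref{keylemma} with $s\ge3$ together with the Sobolev embedding $H^{s}\hookrightarrow L^{\infty}\cap L^{4}$ in three dimensions gives $\|(\sigma^{\delta}_{d},u^{\delta}_{d},\theta^{\delta}_{d})(t)\|_{L^{\infty}_{x}}+\|(\sigma^{\delta}_{d},u^{\delta}_{d},\theta^{\delta}_{d})(t)\|_{L^{2}_{x}}\le C$ uniformly in $\delta$ and $t\le\tau$; since $\mu^{0}$, $v\mu^{0}$, $|v|^{2}\mu^{0}$ decay like Gaussians in $v$, the associated weighted norms in $(x,v)$ are finite and the prefactor $\delta^{2}$ yields the bound. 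For $\mathcal R$, I use: (i) the pointwise estimate $|\xi^{\delta}(t,x)|\le C\delta$, again from Lemma \ref{keylemma}, the conserved bound $\|(\sigma,u,\theta)\|_{H^{s}}\le C$, and Sobolev embedding; and (ii) for $\delta$ small the whole segment $\{(1,0,1)+\lambda\xi^{\delta}:\lambda\in[0,1]\}$ stays in a fixed compact subset of $\{\rho>0,\,T>0\}$, on which $\partial^{2}M$ is a polynomial in $v$ times a Gaussian, hence bounded by $C(1+|v|)^{4}e^{-c|v|^{2}}$ with constants independent of $\lambda,\delta,t,x$. Combining (i)--(ii), $|\mathcal R(t,x,v)|\le C\delta^{2}(1+|v|)^{4}e^{-c|v|^{2}}\,Q(t,x)$ where $Q$ is a quadratic expression in $\sigma,u,\theta,\sigma^{\delta}_{d},u^{\delta}_{d},\theta^{\delta}_{d}$ with uniformly bounded $L^{\infty}_{x}$ and $L^{2}_{x}$ norms; integrating (resp. taking a supremum) in $v$ then gives $\|\mathcal R\|_{\infty}+\|\mathcal R\|_{2}\le C\delta^{2}$, which is \eqref{expand} after possibly shrinking $\delta_{0}$.

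The argument is essentially a quantitative version of the formal derivation of $G$ recalled after \eqref{g}, so most of the work is routine. The only genuine subtlety is point (ii): one must keep $\rho^{\delta}$ and $T^{\delta}$ — and hence all interpolating parameter values along the segment from $(1,0,1)$ to $(\rho^{\delta},\mathfrak{u}^{\delta},T^{\delta})$ — uniformly away from $0$ and $\infty$, so that the second parameter-derivatives of the Maxwellian retain Gaussian tails with $\delta$-independent constants. This is immediate for $\delta\le\delta_{0}$ in view of the refined estimate \eqref{refine-estimate} and the a priori bound \eqref{pert-euler}, which already force $\xi^{\delta}=O(\delta)$ in $H^{s}\hookrightarrow L^{\infty}$; the condition \eqref{T} then guarantees the required control of $T^{\delta}$.
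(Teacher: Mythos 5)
Your proof is correct and follows essentially the same route as the paper's: a second-order Taylor expansion of the local Maxwellian around the constant state $(1,0,1)$, with the linear term identified as $\delta G$ (plus a $\delta^{2}$ correction from $(\sigma^{\delta}_{d},u^{\delta}_{d},\theta^{\delta}_{d})$) and the remainder controlled uniformly via Lemma~\ref{keylemma}, the positive lower bounds on $\rho^{\delta},T^{\delta}$, and Gaussian decay in $v$. The only cosmetic difference is the choice of Taylor path: you expand $M(\rho,m,T;v)$ directly along the straight segment to $(\rho^{\delta},\mathfrak{u}^{\delta},T^{\delta})$ with integral remainder, whereas the paper introduces the scalar auxiliary curve $z\mapsto(1+z\sigma+z^{2}\sigma^{\delta}_{d},\dots)$ and applies Lagrange's form, which folds the $\sigma^{\delta}_{d}$-contribution into a single remainder $\tfrac12\mu''(\delta_{*})\delta^{2}$; both lead to the same estimates.
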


\begin{proof}  We consider
$\mu^\delta$ as a function of $\delta$ and expand it around
$\delta=0$. Instead of directly expanding $\mu^\delta$, we first
introduce auxiliary local Maxwellians $$\mu(z)\equiv
\mu^{\delta,z}=\frac{\rho^{\delta,z} (t,x)}{[2\pi
T^{\delta,z}(t,x)]^{3/2}}
\exp \left\{ -\frac{%
[v-\mathfrak{u}^{\delta,z}(t,x)]^{2}}{2T^{\delta,z}(t,x)}\right\}$$
induced by the following
$\rho^{\delta,z},\,\mathfrak{u}^{\delta,z},\,T^{\delta,z}$:
\[
\rho^{\delta,z}\equiv 1+z\sigma+z^2\sigma^\delta_d\,,\quad
\mathfrak{u}^{\delta,z}\equiv z u+z^2 u^\delta_d\,,\quad
 T^{\delta,z} \equiv 1+z\theta+z^2\theta^\delta_d\,.
\]
Compare with $\rho^\delta,\;\mathfrak{u}^\delta,\;T^\delta$ in
\eqref{perturbation}. Fix $\delta>0$. Note that $\mu(z)$ is a smooth
function of $z$, and moreover, $\mu(\delta)=\mu^\delta$, since
$\rho^{\delta,\delta}=\rho^\delta,\,\mathfrak{u}^{\delta,\delta}=
\mathfrak{u}^\delta,\,T^{\delta,\delta}=T^\delta$. Now we expand
$\mu(z)$ as a function of $z$. By Taylor's formula, $\mu(z)$ can be
written as
\begin{equation}\label{M(z)}
\mu(z)=\mu(0)+\mu'(0) z +\frac{\mu''(z_\ast)}{2}z^2
\end{equation}
for some $0\leq z_\ast\leq z$ which may depend on $(t,x,v)$ and
$\delta$. Note that $\mu(0)=\mu^0$. Denote $\frac{\partial}{\partial
z}$ by $'$. $\mu'(z)$ is given by
\[
\begin{split}
\mu'(z)&= \left\{\frac{(\rho^{\delta,z})'}{\rho^{\delta,z}}
-\frac{3(T^{\delta,z})'}{2T^{\delta,z}}
+(v-\mathfrak{u}^{\delta,z})\cdot
\frac{(\mathfrak{u}^{\delta,z})'}{T^{\delta,z}}
+\frac{|v-\mathfrak{u}^{\delta,z}|^2(T^{\delta,z})'}
{2(T^{\delta,z})^2}\right\} \mu^{\delta,z}\\
&\equiv D^{\delta,z} \mu^{\delta,z},
\end{split}
\]
where
\[
(\rho^{\delta,z})'= \sigma+2z\sigma^\delta_d\,,\quad
(\mathfrak{u}^{\delta,z})'= u+2z u^\delta_d\,,\quad
 (T^{\delta,z})' = \theta+2z\theta^\delta_d\,.
\]
Since $[(\rho^{\delta,z})', \,(\mathfrak{u}^{\delta,z})',\,
(T^{\delta,z})'](0)=[\sigma\,,u\,,\theta]$, we obtain
$$\mu'(0)=\{\sigma+ v\cdot u+(\frac{|v|^2-3}{2})\theta\}\mu^0=G(t,x,v). $$
Take one more derivative to get
\[
\mu''(z)=(D^{\delta,z})' \mu^{\delta,z}+(D^{\delta,z})^2
\mu^{\delta,z}
\]
where
\[
\begin{split}
(D^{\delta,z})'=&\frac{(\rho^{\delta,z})''}{\rho^{\delta,z}}
-\frac{((\rho^{\delta,z})')^2}{(\rho^{\delta,z})^2}
-\frac{3(T^{\delta,z})''}{2T^{\delta,z}}
+\frac{3((T^{\delta,z})')^2}{2(T^{\delta,z})^2}
-\frac{|(\mathfrak{u}^{\delta,z})'|^2}{T^{\delta,z}}\\
&+(v-\mathfrak{u}^{\delta,z})\cdot\{
\frac{(\mathfrak{u}^{\delta,z})''}{T^{\delta,z}}-
2\frac{(T^{\delta,z})'(\mathfrak{u}^{\delta,z})'}
{(T^{\delta,z})^2}\}
+|v-\mathfrak{u}^{\delta,z}|^2\{\frac{(T^{\delta,z})''}
{2(T^{\delta,z})^2}-\frac{((T^{\delta,z})')^2}{(T^{\delta,z})^3}
\}\,.
\end{split}
\]
And
$(\rho^{\delta,z})'',\,(\mathfrak{u}^{\delta,z})'',\,(T^{\delta,z})''$
are given by
\[
(\rho^{\delta,z})''= 2\sigma^\delta_d\,,\quad
(\mathfrak{u}^{\delta,z})''= 2 u^\delta_d\,,\quad
 (T^{\delta,z})'' =2\theta^\delta_d\,.
\]
Now take $z=\delta$ in \eqref{M(z)} to obtain
\[
\mu^\delta=\mu^0+ g
\delta+\frac{\mu''(\delta_\ast)}{2}\delta^2,\text{ for some
}0\leq\delta_\ast\leq\delta\,.
\]
In order to prove \eqref{expand}, it now suffices to show that
$\|\mu''(\delta_\ast)\|_\infty+\|\mu''(\delta_\ast)\|_2$ is
uniformly bounded. This follows the uniform estimates of
$\sigma_d^\delta\,,u_d^\delta\,,\theta_d^\delta$ in Lemma
\ref{keylemma}: For each $0\leq z=\delta_\ast\leq\delta\leq\delta_0$
and for $t\leq\tau$, we have the uniform point-wise estimates of
$\rho^{\delta,z},\mathfrak{u}^{\delta,z},T^{\delta,z}$,
$(\rho^{\delta,z})',(\mathfrak{u}^{\delta,z})',(T^{\delta,z})'$,
$(\rho^{\delta,z})'',(\mathfrak{u}^{\delta,z})'',(T^{\delta,z})''$
and moreover,  $\rho^{\delta,z}$ and $T^{\delta,z}$ have uniform
lower bounds for sufficiently small $\delta\leq\delta_0$. This
completes the proof of the lemma.
\end{proof}

Now \eqref{AL} is an easy consequence of Lemma \ref{lemma4.1} within the
compressible Euler limit regime.

\begin{proof}\textit{of Theorem \ref{main-theorem}:}
From \eqref{g} and \eqref{gg}, we first get
\[
\begin{split}
&\;\sup_{0\leq t\leq\tau}\|G^\varepsilon(t)- G(t)\|_\infty
+\sup_{0\leq t\leq\tau}\|G^\varepsilon(t)-G(t)\|_2\\
&=\sup_{0\leq t\leq\tau}\|\frac{F^\varepsilon(t)-\mu^0}{\delta }-
G(t)\|_\infty +\sup_{0\leq
t\leq\tau}\|\frac{F^\varepsilon(t)-\mu^0}{\delta }-G(t)\|_2\,.
\end{split}
\]
Rewrite $\frac{F^\varepsilon(t)-\mu^0}{\delta }- G(t)$ as follows:
\[
\frac{F^\varepsilon(t)-\mu^0}{\delta }-
G(t)=\frac{F^\varepsilon(t)-\mu^\delta(t)}{\delta }+
\frac{\mu^\delta(t)-\mu^0-\delta G(t)}{\delta }\,.
\]
Therefore, we obtain
\[
\begin{split}
&\sup_{0\leq t\leq\tau}\|G^\varepsilon(t)- G(t)\|_\infty
+\sup_{0\leq t\leq\tau}\|G^\varepsilon(t)-G(t)\|_2\\
&\leq \sup_{0\leq
t\leq\tau}\|\frac{F^\varepsilon(t)-\mu^\delta(t)}{\delta }\|_\infty
+\sup_{0\leq t\leq\tau}\|\frac{F^\varepsilon(t)-\mu^\delta(t)}{\delta }\|_2 \\
&\;\;+ \sup_{0\leq t\leq\tau}\|\frac{\mu^\delta(t)-\mu^0-\delta
G(t)}{\delta }\|_\infty +\sup_{0\leq
t\leq\tau}\|\frac{\mu^\delta(t)-\mu^0-\delta G(t)}{\delta }\|_2\,.
\end{split}
\]
By \eqref{euler-limit} and \eqref{expand}, the conclusion follows.
\end{proof}

\end{document}